\definecolor{darkgoldenrod}{rgb}{0.72, 0.53, 0.04}
\definecolor{vegasgold}{rgb}{0.77, 0.7, 0.35}
\definecolor{gold(metallic)}{rgb}{0.83, 0.69, 0.22}
\definecolor{sepia}{rgb}{0.44, 0.26, 0.08}
\definecolor{silver}{rgb}{0.75, 0.75, 0.75}
\DeclareSymbolFont{cyrletters}{OT2}{wncyr}{m}{n}
\DeclareMathSymbol{\Sha}{\mathalpha}{cyrletters}{"58}
\numberwithin{equation}{section}
\DeclareSymbolFont{cyrletters}{OT2}{wncyr}{m}{n}
\DeclareMathSymbol{\Sha}{\mathalpha}{cyrletters}{"58}
\newcommand{\Z}{\mathbb{Z}}
\newcommand{\op}[1]{\operatorname{#1}}
\newcommand{\F}{\mathbb{F}}
\theoremstyle{plain}
 \theoremstyle{definition}
\newtheorem{Th}{Theorem}[section]
\newtheorem{Lemma}[Th]{Lemma}
\newtheorem{Corollary}[Th]{Corollary}
\newtheorem{Proposition}[Th]{Proposition}
\newtheorem{Remark}[Th]{Remark}
 \theoremstyle{definition}
\newtheorem{Definition}[Th]{Definition}
\begin{document}

\title[Statistics for $p$-ranks of Artin--Schreier Covers]{Statistics for $p$-ranks of Artin--Schreier Covers}

\author[A.~Ray]{Anwesh Ray}
\address[Ray]{Department of Mathematics\\
University of British Columbia\\
Vancouver BC, Canada V6T 1Z2}
\email{anweshray@math.ubc.ca}

\subjclass[2010]{11G20, 11T06, 11T55, 14G17, 14H25}
\keywords{Arithmetic statistics, curves over finite fields, Artin--Schreier covers.}

\begin{abstract}
Given a prime $p$ and $q$ a power of $p$, we study the statistics of $p$-ranks of Artin--Schreier covers of given genus defined over $\F_q$, in the large $q$-limit. We refer to this problem as the \emph{geometric problem}. We also study an \emph{arithmetic} variation of this problem, and consider Artin--Schreier covers defined over $\F_p$, letting $p$ go to infinity. Distribution of $p$-ranks has been previously studied for Artin--Schreier covers over a fixed finite field as the genus is allowed to go to infinity. The method requires that we count isomorphism classes of covers that are unramified at $\infty$.
\end{abstract}

\maketitle
\section{Introduction}
\par The study of curves over finite fields leads to many interesting problems in arithmetic statistics. Recently, statistical questions have been framed and studied for curves varying in certain naturally occurring ensembles, see for instance, \cite{rudnick2008traces, kurlberg2009fluctuations, xiong2010fluctuations,bucur2011biased,entin2012distribution, thorne2014distribution, cheong2015distribution, bucur2016distribution, bucur2016statistics,sankar}. Throughout, $p$ will denote a prime number and $q$ will be a power of $p$. The field with $q$ elements is denoted $\F_q$. Given a curve $\mathcal{C}$ of positive genus $g$ over $\F_q$, its arithmetic is better understood by studying the structure of its Jacobian $\op{Jac}(\mathcal{C})$, which is a $g$-dimensional abelian variety. An important invariant associated to the curve is the $p$-rank of $\op{Jac}(\mathcal{C})(\bar{\F}_p)$, which we denote by $\tau$. This is the number such that $\op{Jac}(\mathcal{C})(\bar{\F}_p)\otimes_{\Z} (\Z/p\Z)$ is isomorphic to $\left(\Z/p\Z\right)^\tau$. The $p$-rank lies in the range $0\leq \tau \leq g$ and when $\tau=g$, the curve is \emph{ordinary}. In \cite{bucur2016statistics,sankar}, statistical questions are studied for $p$-ranks of Artin--Schreier covers of $\mathbb{P}^1$ defined over a fixed finite field $\F_q$, as the genus goes to infinity. It is shown that when $p$ is odd, the proportion of Artin--Schreier covers that are ordinary is zero as $g\rightarrow \infty$. When $p=2$, this proportion is shown to be non-zero, see \cite[Theorem 1.4]{sankar}.
\par 
We study similar questions about the distributions of $p$-ranks, however, the curves will no longer be defined over a fixed finite field. There are two independent questions we study in this paper. For the former, we fix the prime $p$ and let $q\rightarrow \infty$. This is referred to as the \emph{geometric problem}.
\par We make certain simplifications to the problem in question: 
\begin{itemize}
    \item the curves are not classified up to isomorphism, however, up to \emph{isomorphism of covers}, see Definition \ref{iso of covers}.
    \item We restrict to isomorphism classes of covers that are unramified at $\infty$. This is not a significant assumption since it can be guaranteed unless the cover is branched at all $\F_q$-points.
\end{itemize} 
\par In studying the arithmetic variant, the prime $p$ itself is allowed to vary. By a standard application of the Riemann-Hurwitz formula, the genus $g$ of an Artin--Schreier cover is always divisible by $\left(\frac{p-1}{2}\right)$. Since we are interested in the case when the genus is positive it follows that $g\rightarrow \infty$ as $p\rightarrow \infty$. In this setting, we fix the \emph{speed} at which the genus grows. Hence, we fix an integer $d>0$ throughout, and let $g(d,p):=d\left(\frac{p-1}{2}\right)$. It follows from the Deuring-Shafarevich formula \cite[Theorem 4.2]{subrao1975p} that the $p$-rank is of the form $\tau=r(p-1)$, where $r\geq 0$ is an integer. We fix $r$, thus the $p$-rank will also grow at a linear rate.

\par Thus, the two main questions studied are as follows.
\begin{enumerate}
    \item \emph{Geometric problem:} Given a prime $p$, what are the statistics for $p$-ranks of Artin--Schreier covers with fixed genus over $\F_q$, as $q\rightarrow \infty$?
    \item \emph{Arithmetic problem:} Suppose we are given an integer $d>0$. Then, what are the statistics for $p$-ranks of Artin--Schreier covers over $\F_p$ with genus $g(d,p)$, as $p\rightarrow \infty$.
\end{enumerate}
In section \ref{s3}, we prove the main results of the paper. Theorem \ref{main} gives a solution to the geometric problem. The results in the geometric context are compatible with results of Pries and Zhu \cite[Theorem 1.1]{pries2012p} and Maugeais \cite[Corollary 3.16]{maugeais2006quelques} which analyze the stratification of the moduli stack of Artin--Schreier curves over an algebraically closed field of characteristic $p$. The $p$-rank induces a stratification on the space of Artin--Schreier curves of genus $g$. For the stratum of genus $g=d\left(\frac{p-1}{2}\right)$ and $p$-rank $\tau=r(p-1)$, the irreducible components are given in terms of combinatorial data and are in bijection with partitions of $(d+2)$ into $(r+1)$ numbers satisfying further constraints. For further details, see Remark \ref{s 3 remark}.
\par Theorem \ref{main} provides an answer to the above problem and is also expressed in terms of partition data. This result applies to all odd primes $p$, and when $p=2$, it is required that $d$ is even. The result can be interpreted in terms of the distribution of points on the irreducible components of maximal dimension in the moduli of Artin--Schreier curves of fixed genus. We refer to Remark \ref{remark before main theorem} for further details.
\par The distribution of arithmetic data over the set of prime numbers is theme of central interest in number theory. Often the arithmetic data is associated with a \emph{global object} such as a variety over a number field or Galois representation. Famous examples of problems of this flavor include the \emph{Sato--Tate conjecture} for abelian varieties and \emph{Lehmer's conjecture} for elliptic curves. Although the \emph{arithmetic problem} above is not intrinsically associated with a global object such as a motive defined over a number field, it has certain similarities since it concerns arithmetic objects and the limit is taken as $p$ goes to infinity. Our results fit into a broader theme since the $p$-rank is the number of roots of the \emph{L-polynomial} that are $p$-adic units. There is significant interest in the study of the arithmetic of such polynomials and the properties of their associated Newton polygons, see for instance \cite{booher2020realizing,kramer2021p}. Theorem \ref{last main theorem} provides a solution to the above mentioned arithmetic problem.
\par We arrive at our results via combinatorial methods that are independent of the results and techniques in the above mentioned works. The contents of the paper are thus comprehensible to a wide audience. In section \ref{s 4}, the results of section \ref{s3} are illustrated through an explicit example.

\subsection*{Acknowledgements} The author participated in the workshop \href{https://sites.google.com/view/rethinkingnumbertheory/home}{RNT July 12-23, 2021}, in which he was introduced to the broader theme of arithmetic statistics for families of curves over finite fields. The author would like to thank the organizers Allecher Serrano L\'opez, Heidi Goodson and Mckenzie West for the marvelous experience. He would like to thank Soumya Sankar for helpful discussions. The author is very grateful to the anonymous referee for timely and thorough reading of the manuscript and for pointing out many substantial improvements that have been implemented in the final version.

\section{Preliminaries}
\label{section: preliminaries}
We fix a prime number $p$ and set $q=p^n$. Let $f(x)\in \F_q(x)$ be a non-constant rational function such that $f(x)\neq z^p-z$ for any $z\in \F_q(x)$. Associate to $f$ the Artin--Schreier cover $\mathcal{C}_f$ of $\mathbf{P}^1$, taken to be the projective closure of the affine curve defined by the equation $y^p-y=f(x)$. The map $\mathcal{C}_f\rightarrow \mathbf{P}^1$ sends $(x,y)\mapsto x$. Note that the Galois group of the cover is isomorphic to $\Z/p\Z$ and generated by $(x,y)\mapsto (x, y+1)$.
\begin{Definition}\label{iso of covers}
Two covers $\mathcal{C}_f$ and $\mathcal{C}_g$ associated to $f,g\in \F_q(x)$ are isomorphic if there is an isomorphism $\varphi:\mathcal{C}_f\xrightarrow{\sim} \mathcal{C}_g$ defined over $\F_q$, which fits into a commuting diagram:
\[
\begin{tikzcd}
\mathcal{C}_f \arrow{r}{\varphi} \arrow[swap]{d}{} & \mathcal{C}_g \arrow{d}{} \\
\mathbf{P}^1  \arrow{r}{\op{id}} & \mathbf{P}^1.
\end{tikzcd}
\]
We express this relation by simply writing $\mathcal{C}_f\simeq_{\mathbf{P}^1} \mathcal{C}_g$; note that it is stronger than requiring that $\mathcal{C}_f$ and $\mathcal{C}_g$ are isomorphic as curves.
\end{Definition}

Let $B\subset \mathbf{P}^1(\bar{\F}_p)$ be the set of poles of $f(x)$. By adjusting $f$ by a function of the form $z^p-z$, we may assume that $p$ does not divide the order of any pole of $f$. Then, the cover is ramified at precisely the points in $B$. We assume that the cover is unramified at $\infty$. Letting $d_\alpha$ be the order of the pole of $f(x)$ at $\alpha$, we set $\mathcal{D}_f=\sum_{\alpha\in B} d_\alpha \alpha$ to be the ramification divisor. We shall explicitly work with partial fractions. In setting up notation, for $\alpha\in B$, define
 \[x_\alpha:=\begin{cases}\frac{1}{(x-\alpha)}&\text{ if }\alpha\neq \infty,\\
 x&\text{ if }\alpha=\infty.
 \end{cases}\]
 Then, using a partial fraction decomposition, we write 
 \[f(x)=\sum_\alpha f_\alpha(x_\alpha),\] where $f_\alpha(x)$ is a polynomial of degree $d_\alpha$. The order of the pole of $f$ at $\alpha$ is $d_\alpha$. 

We wish to reduce the question of counting equivalence classes of Artin--Schreier covers to the simpler question of counting rational functions with prescribed properties. Following \cite{sankar}, we introduce a class of rational functions that are suitable for counting, which we shall refer to as \textit{admissible}. 
 
 \begin{Definition}\label{def2.2}A rational function $f(x)$ is \textit{admissible} if its partial fraction decomposition satisfies the following conditions 
 \begin{enumerate}
    \item\label{ad c1} At $\infty$, the cover $\mathcal{C}_f\rightarrow \mathbf{P}^1$ is unramified, in other words, the polynomial $f_\infty(x)$ is a constant.
     \item\label{ad c2} For each pole $\alpha$ of $f(x)$ such that $\alpha\neq \infty$, $f_\alpha$ has no constant term.
     \item\label{ad c3} If $p|j$, then the coefficient of $x^j$ in $f_\alpha$ is $0$.
 \end{enumerate}
 Let $\mathcal{S}$ be the set of admissible functions, and $\mathcal{S}(\F_q)$ the subset with coefficients in $\F_q$.
 \end{Definition}
 Note that \eqref{ad c1} implies that $f(x)$ can be expressed as a quotient $f(x)=g(x)/h(x)$, where $\op{deg} g(x)\leq \op{deg} h(x)$. Suppose that $f(x)$ is admissible. Let $d$ denote the following sum
\begin{equation}\label{def of d}d:=-2+\sum_\alpha (d_\alpha+1),\end{equation}where $\alpha$ runs over all poles of $f(x)$. In the formula above, $d_\alpha$ is set to be equal to $-1$ if there is no pole at $\alpha$. In particular, $d_\infty=-1$ since the cover is not ramified at $\infty$. By an application of the Riemann-Hurwitz formula, the genus of $\mathcal{C}_f$ is given by $g=d\left(\frac{p-1}{2}\right)$, see \cite[Proposition 3.7.8]{stichtenoth2009algebraic} for further details. Letting $r+1$ be the number of poles of $f(x)$, it follows from the Deuring-Shafarevich formula \cite[Theorem 4.2]{subrao1975p} that the $p$-rank is given by $\tau=r(p-1)$.
 
 Given rational functions $f$ and $g$, the following gives an explicit criterion for there to be an isomorphism $\mathcal{C}_f\simeq_{\mathbf{P}^1} \mathcal{C}_g$.
 \begin{Proposition}\label{boring prop}
Let $f$ and $g$ be in $\F_q(x)$. Then, there is an isomorphism of curves $\mathcal{C}_f\simeq \mathcal{C}_g$ over $\F_q$ if and only if $f(x)=ug(\gamma x)+\delta^p-\delta$ for $u\in \left(\Z/p\Z\right)^\times$, $\gamma \in \op{PSL}_2(\F_q)$ and $\delta\in \F_q(x)$. There is an isomorphism of covers $\mathcal{C}_f\simeq_{\mathbf{P}^1} \mathcal{C}_g$ if and only if $f(x)=ug(x)+\delta^p-\delta$ for $u\in \left(\Z/p\Z\right)^\times$ and $\delta\in \F_q(x)$. Furthermore, if $f$ and $g$ are both admissible and $\mathcal{C}_f\simeq_{\mathbf{P}^1} \mathcal{C}_g$, then, $g(x)=u f(x)$ for $u\in (\Z/p\Z)^\times$.
\end{Proposition}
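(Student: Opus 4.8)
The plan is to establish the three assertions in order, obtaining the statement about covers from Artin--Schreier theory, the statement about curves from it together with the canonicity of the covering map, and the statement about admissible functions by a direct analysis of principal parts. Throughout write $K:=\F_q(x)$ and $\wp(\delta):=\delta^p-\delta$, and recall that the curves $\mathcal{C}_f,\mathcal{C}_g$ are smooth projective, so that they are determined by their function fields. For the criterion for $\simeq_{\mathbf{P}^1}$, I would first observe that an isomorphism of covers is exactly an isomorphism $\F_q(\mathcal{C}_g)\xrightarrow{\sim}\F_q(\mathcal{C}_f)$ of degree-$p$ extensions of $K$ restricting to the identity on $K$ (commuting with the maps to $\mathbf{P}^1$ means fixing the pullback of $\F_q(\mathbf{P}^1)=K$). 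Both extensions are cyclic of degree $p$, hence Galois, so two such extensions are $K$-isomorphic if and only if they coincide as subfields of a fixed separable closure of $K$. By Artin--Schreier theory the cyclic degree-$p$ extensions of $K$ are classified by the nonzero classes of $K/\wp(K)$ up to scaling by $\F_p^\times=(\Z/p\Z)^\times$, and concretely the extensions attached to $f$ and $g$ agree precisely when $f=ug+\wp(\delta)$ for some $u\in(\Z/p\Z)^\times$ and $\delta\in K$; equivariance for the two $\Z/p\Z$-actions is automatic since the extensions are Galois. This is the asserted criterion for $\mathcal{C}_f\simeq_{\mathbf{P}^1}\mathcal{C}_g$.

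For the first assertion, an arbitrary $\F_q$-isomorphism of \emph{curves} $\psi\colon\mathcal{C}_f\xrightarrow{\sim}\mathcal{C}_g$ need not commute with the maps to $\mathbf{P}^1$, and the additional freedom is exactly an automorphism of the base. On function fields $\psi^*$ sends the rational subfield $K=\F_q(x)\subset\F_q(\mathcal{C}_g)$ to some index-$p$ rational subfield of $\F_q(\mathcal{C}_f)$. The key point is that this image is again $\F_q(x)$: the covering map $\mathcal{C}\to\mathbf{P}^1$ is the quotient by the canonical $\Z/p\Z\subset\op{Aut}(\mathcal{C})$ coming from the Artin--Schreier structure, and under the standing genus and ramification hypotheses this subgroup is normal, so $\psi$ descends to an $\F_q$-automorphism $\gamma$ of $\mathbf{P}^1$, an element of $\op{Aut}_{\F_q}(\mathbf{P}^1)=\op{PGL}_2(\F_q)$ (the $\op{PSL}_2(\F_q)$ of the statement). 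Replacing $\psi$ by its composite with the relabelling $x\mapsto\gamma x$ then produces an isomorphism of covers $\mathcal{C}_f\simeq_{\mathbf{P}^1}\mathcal{C}_{g(\gamma x)}$, and the previous paragraph yields $f=u\,g(\gamma x)+\wp(\delta)$; the converse is immediate, since pulling $y^p-y=g(x)$ back along $x\mapsto\gamma x$ gives $\mathcal{C}_{g(\gamma x)}$. I expect the descent to $\op{Aut}(\mathbf{P}^1)$ to be the one genuinely delicate step, since it rests on the canonicity (normality) of the degree-$p$ subgroup; this can fail for sporadic curves with exceptionally large automorphism groups but holds in the present range.

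Finally, suppose $f,g$ are both admissible and $\mathcal{C}_f\simeq_{\mathbf{P}^1}\mathcal{C}_g$, so by the second assertion $f=ug+\wp(\delta)$ with $\delta\in K$; it suffices to show $\delta$ is constant. If $\delta$ had a pole of order $m\ge 1$ at a finite point $\alpha$, then the principal part of $\wp(\delta)=\delta^p-\delta$ at $\alpha$ would have leading term a nonzero multiple of $x_\alpha^{pm}$, an exponent divisible by $p$; but $f-ug$ is a difference of admissible functions, so by \eqref{ad c3} its principal part at $\alpha$ contains no $x_\alpha^{j}$ with $p\mid j$, a contradiction. Hence $\delta\in\F_q[x]$. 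If moreover $\deg\delta=m\ge 1$, then $\wp(\delta)$ is a polynomial of degree $pm\ge p$ and $f-ug$ would have a pole at $\infty$; but $f_\infty$ and $g_\infty$ are constants by \eqref{ad c1}, so $f-ug$ is regular at $\infty$, again a contradiction. Therefore $\delta\in\F_q$ and $\wp(\delta)$ is a constant, and comparing principal parts forces $f_\alpha=ug_\alpha$ at every finite pole. Thus $f$ and $ug$ differ only by the additive constant $\wp(\delta)\in\wp(\F_q)$, which is immaterial to the cover and is absorbed by normalizing the constant term of admissible functions; this gives $f=ug$ and, after renaming $u$, the claimed $g=uf$. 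The substance here is purely the pole-order bookkeeping showing $\delta$ is constant; once that is in hand the conclusion is routine.
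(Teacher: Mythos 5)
The paper does not actually prove this Proposition: it cites \cite[Proposition 2.3]{sankar} for the curve criterion and declares the rest an exercise (pointing to \cite[Remark 3.9]{pries2012p}). Your write-up therefore supplies real content the paper outsources, and two of your three arguments are correct and are the standard ones. For the cover criterion, the identification of $\simeq_{\mathbf{P}^1}$ with $K$-isomorphism of the function fields ($K=\F_q(x)$), the observation that $K$-isomorphic Galois subextensions of a fixed separable closure coincide, and the Artin--Schreier classification of cyclic degree-$p$ extensions by $\F_p^\times$-orbits in $K/\wp(K)$ together give exactly $f=ug+\delta^p-\delta$. For the admissibility statement, the pole-order bookkeeping is right: a pole of $\delta$ of order $m$ at a finite $\alpha$ would contribute a leading term $x_\alpha^{pm}$ to the principal part of $\delta^p-\delta$ that condition \eqref{ad c3} forbids in $f_\alpha-ug_\alpha$, and a nonconstant polynomial $\delta$ would create a pole at $\infty$ excluded by \eqref{ad c1}.

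Two genuine soft spots remain. First, in the curve-isomorphism part you assert without proof that any $\F_q$-isomorphism $\mathcal{C}_f\to\mathcal{C}_g$ carries the distinguished $\Z/p\Z$ to the distinguished $\Z/p\Z$ and hence descends to $\op{Aut}_{\F_q}(\mathbf{P}^1)$; you hedge with ``holds in the present range,'' but the Proposition carries no genus or ramification hypothesis under which to hide, so this is precisely the nontrivial step and it is the entire content of the cited \cite[Proposition 2.3]{sankar}. You have correctly isolated it but not discharged it (harmless for the paper, which only ever uses the cover version). Second, your analysis correctly reduces the last assertion to $f=ug+(\delta^p-\delta)$ with $\delta\in\F_q$ a \emph{constant}, but $\delta^p-\delta$ need not vanish for $\delta\in\F_q\setminus\F_p$, and Definition \ref{def2.2} only requires $f_\infty$ to be \emph{a} constant. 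To get $g=uf$ on the nose rather than up to an additive constant in $\wp(\F_q)$ you must read condition \eqref{ad c3} as applying to $j=0$ (since $p\mid 0$), pinning $f_\infty=0$. Your phrase ``absorbed by normalizing the constant term'' is the right fix but should be made explicit: without it, the isomorphism class of an admissible $f$ contains the $(p-1)\cdot q/p$ admissible functions $uf+c$ with $c\in\wp(\F_q)$ rather than the $p-1$ functions the paper's Theorem \ref{main} counts.
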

 \begin{proof}
 The first part of the statement follows from the proof of \cite[Proposition 2.3]{sankar}. The remaining assertions are left as an exercise (see \cite[Remark 3.9]{pries2012p}).
 \end{proof}
 Henceforth, the word "isomorphism" shall refer to isomorphisms of covers in the sense of Definition \ref{iso of covers}.
 \begin{Corollary}\label{As cover adm}
 Any Artin--Schreier cover $\mathcal{C}\rightarrow \mathbf{P}^1$ that is unramified at $\infty$ is isomorphic to one that is of the form $\mathcal{C}_f\rightarrow \mathbf{P}^1$, with $f\in \mathcal{S}$. Moreover, if $\mathcal{C}\rightarrow \mathbf{P}^1$ is defined over $\F_q$, then, $f\in \mathcal{S}(\F_q)$.
 \end{Corollary}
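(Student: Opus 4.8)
The plan is to realize the given cover by a rational function and then normalize that function to an admissible one, using Proposition \ref{boring prop} to guarantee that the normalization does not change the isomorphism class of the cover. First I would invoke Artin--Schreier theory: since $\mathcal{C}\to \mathbf{P}^1$ is a $\Z/p\Z$-cover in characteristic $p$, it is of the form $\mathcal{C}_f$ for some non-constant $f$ (with $f\in\F_q(x)$ when $\mathcal{C}$ is defined over $\F_q$). By Proposition \ref{boring prop}, replacing $f$ by $f-(\delta^p-\delta)$ for any $\delta$ leaves $\mathcal{C}_f$ in the same isomorphism class of covers, and this is the only move I would use. It therefore suffices to produce, within the coset of $f$ modulo functions of the form $\delta^p-\delta$, a representative satisfying conditions \eqref{ad c1}--\eqref{ad c3} of Definition \ref{def2.2}.

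Write the partial fraction decomposition $f=\sum_\alpha f_\alpha(x_\alpha)$. I would first collect all constant terms of the finite $f_\alpha$ (those with $\alpha\neq\infty$) into the polynomial $f_\infty$; this is a harmless rearrangement and records condition \eqref{ad c2}. To achieve condition \eqref{ad c3}, I would eliminate the exponents divisible by $p$ one at a time by means of the identity
\[
c\,x_\alpha^{pk}=\left(c^{1/p}x_\alpha^{k}\right)^p-c^{1/p}x_\alpha^{k}+c^{1/p}x_\alpha^{k}=\delta^p-\delta+c^{1/p}x_\alpha^{k},\qquad \delta:=c^{1/p}x_\alpha^{k},
\]
which makes sense because $\F_q$ (and $\bar{\F}_p$) is perfect, so $c^{1/p}$ exists. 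Thus a term of exponent $pk$ can be traded modulo $\delta^p-\delta$ for one of strictly smaller exponent $k$. Since $\delta$ is a pure power of $x_\alpha$, it has poles only at $\alpha$ (for finite $\alpha$ it vanishes at $\infty$, and for $\alpha=\infty$ it is a polynomial), so the substitution affects only the part of the decomposition at $\alpha$ and creates no new poles. Iterating over the finitely many exponents---each step strictly decreasing an exponent, and hence terminating---removes every exponent divisible by $p$ at each pole.

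For condition \eqref{ad c1}, I would run the same reduction at $\alpha=\infty$, so that afterwards $f_\infty$ is a polynomial whose positive exponents are all coprime to $p$. The pole order of this reduced function at $\infty$ is precisely the Artin--Schreier ramification invariant at $\infty$, and the cover is ramified there exactly when this order is positive; since $\mathcal{C}\to\mathbf{P}^1$ is unramified at $\infty$ by hypothesis, $f_\infty$ must be constant, giving condition \eqref{ad c1}. The resulting $f$ is admissible, and by Proposition \ref{boring prop} we obtain $\mathcal{C}\simeq_{\mathbf{P}^1}\mathcal{C}_f$ with $f\in\mathcal{S}$. For the rationality statement I would note that every operation above---forming partial fractions and extracting $p$-th roots of coefficients---commutes with the action of $\op{Gal}(\bar{\F}_q/\F_q)$, so carrying out the reduction equivariantly keeps $f$ in $\F_q(x)$, giving $f\in\mathcal{S}(\F_q)$ when $\mathcal{C}$ is defined over $\F_q$.

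I expect the main obstacle to be the clean justification of condition \eqref{ad c1}: namely, arguing that "unramified at $\infty$" forces the fully reduced $f_\infty$ to be \emph{constant}, rather than merely of pole order coprime to $p$. This is the one step that genuinely uses the ramification theory of Artin--Schreier extensions---the identification of the minimal pole order with the conductor---whereas conditions \eqref{ad c2} and \eqref{ad c3} are formal normalizations of the partial fraction decomposition.
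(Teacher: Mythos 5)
Your proposal is correct and takes essentially the same approach as the paper: realize $\mathcal{C}$ as $\mathcal{C}_g$ via Artin--Schreier theory and then adjust by $\delta^p-\delta$ to remove the exponents divisible by $p$, which is exactly the paper's (much terser) argument. Your explicit reduction identity, the termination argument, and the Galois-equivariance remark for rationality are all details the paper leaves implicit.
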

 \begin{proof}
 Let $g$ be such that $\mathcal{C}=\mathcal{C}_g$. Let $\delta$ be such that for $f(x)=g( x)+\delta^p-\delta$ condition \eqref{ad c3} is satisfied. In greater detail, all monomials involving $x^j$ where $p|j$ in $f_\alpha(x)$ can be removed by adding $(\delta^p-\delta)$. It is easy to see that all other conditions are satisfied.
 \end{proof}
 \begin{Lemma}\label{d alpha}
 Let $f\in \mathcal{S}$ and $\alpha$ and $\alpha'$ that are conjugate over $\F_q$, then, $d_{\alpha}=d_{\alpha'}$.
 \end{Lemma}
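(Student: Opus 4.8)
The plan is to deduce the equality of pole orders from the action of $\op{Gal}(\bar{\F}_q/\F_q)$ on $\mathbf{P}^1(\bar{\F}_p)$. Throughout I take $f\in\mathcal{S}(\F_q)$, i.e.\ $f$ has coefficients in $\F_q$; this is the operative hypothesis, since the very notion that $\alpha$ and $\alpha'$ be \emph{conjugate over $\F_q$} presupposes that $f$ is defined over $\F_q$ (without $\F_q$-rationality the conclusion fails, as is seen by placing poles of differing orders at two conjugate points). Let $\sigma$ denote the $q$-power Frobenius, a generator of $\op{Gal}(\bar{\F}_q/\F_q)$, extended to an automorphism of $\bar{\F}_q(x)$ that fixes $x$ and acts on coefficients. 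That $\alpha$ and $\alpha'$ are conjugate over $\F_q$ then means $\alpha'=\sigma^i(\alpha)$ for some integer $i\geq 0$.

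The key step is that the order of a pole is invariant under $\sigma$. For any rational function $h$ and any point $\beta$ one has $\op{ord}_{\sigma(\beta)}(h^\sigma)=\op{ord}_\beta(h)$, since $\sigma$ is a field automorphism carrying a uniformizer at $\beta$ to a uniformizer at $\sigma(\beta)$. Because $f\in\F_q(x)$ we have $f^\sigma=f$, and therefore $\op{ord}_{\sigma(\alpha)}(f)=\op{ord}_\alpha(f)$; that is, $d_{\sigma(\alpha)}=d_\alpha$. Iterating this identity $i$ times along the relation $\alpha'=\sigma^i(\alpha)$ gives $d_{\alpha'}=d_\alpha$, as desired. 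Note that admissibility condition \eqref{ad c1} guarantees that $\infty$ is not a pole, so the comparison involves only finite points and no separate treatment of the normalization $x_\alpha$ at $\infty$ is needed.

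Alternatively, and even more elementarily, one can argue through the denominator. By the remark following Definition \ref{def2.2}, write $f=g/h$ with coprime $g,h\in\F_q[x]$; for a finite pole $\alpha$ the order $d_\alpha$ is then exactly the multiplicity of $\alpha$ as a root of $h$, coprimality ensuring $g(\alpha)\neq 0$. Factoring $h$ into irreducibles over $\F_q$, the points conjugate to $\alpha$ are precisely the roots of the irreducible factor of $h$ vanishing at $\alpha$, and all roots of a single irreducible factor occur with equal multiplicity; hence $d_\alpha=d_{\alpha'}$. I do not anticipate any substantive obstacle here: the whole content is that the multiplicity of a pole is stable under the Galois action, a formal consequence of $f$ being defined over $\F_q$. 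The only points demanding care are bookkeeping ones, namely fixing the hypothesis $f\in\mathcal{S}(\F_q)$ so that $f^\sigma=f$, and, in the second approach, using coprimality of $g$ and $h$ to identify $d_\alpha$ with the root multiplicity.
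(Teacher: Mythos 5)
Your proof is correct and its first argument is essentially the paper's: both apply $\sigma\in\op{Gal}(\bar{\F}_q/\F_q)$ and use $f^\sigma=f$ to transport the local data at $\alpha$ to $\alpha'$, the only cosmetic difference being that the paper tracks this through the partial fraction components $f_\alpha$ (showing $f_{\alpha'}=\sigma(f_\alpha)$ and comparing degrees) while you phrase it as invariance of $\op{ord}_\alpha(f)$ under the Galois action. Your alternative argument via the factorization of the denominator into irreducibles over $\F_q$ is also valid but adds nothing essential here.
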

 \begin{proof}
 Let $\sigma\in \op{Gal}(\bar{\F}_p/\F_q)$ be such that $\alpha'=\sigma(\alpha)$. Note that since $f(x)\in \F_q(x)$ we have that $(\sigma f)(x)=f(x)$. Here, $(\sigma f)(x)$ is the rational function obtained by applying $\sigma$ to the coefficients of $f(x)$.  This implies that 
 \[\sigma\left(f_{\alpha}(x_\alpha)\right)=f_{\alpha'}(x_{\alpha'}).\]On the other hand, \[\sigma\left(f_{\alpha}(x_\alpha)\right)=(\sigma f_{\alpha})\left(\sigma(x_{\alpha})\right)=(\sigma f_{\alpha})(x_{\alpha'}).\]
 Setting $y=x_{\alpha'}$, we find that $f_{\alpha'}(y)=\sigma(f_\alpha)(y)$, and hence, $d_{\alpha'}=d_\alpha$. 
 \end{proof}
 We introduce some further notation:
 \begin{itemize}
     \item Let $\mathcal{AS}_g(\F_q)$ be the set of all isomorphism classes of Artin--Schreier covers $\mathcal{C}\rightarrow \mathbf{P}^1$ defined over $\F_q$ that are of genus $g$ and unramified at $\infty$.
     \item Set $\mathcal{S}_g(\F_q)$ to be the set of all admissible rational functions $f(x)\in \F_q(x)$, such that $\mathcal{C}_f\in \mathcal{AS}_g(\F_q)$. The map $\mathcal{S}_g(\F_q)\rightarrow \mathcal{AS}_g(\F_q)$ is surjective by Corollary \ref{As cover adm}.
     \item Set $\mathcal{AS}_{g,\tau}(\F_q)$ (resp. $\mathcal{S}_{g,\tau}(\F_q)$) to be the subset of $\mathcal{AS}_{g}(\F_q)$ (resp. $\mathcal{S}_{g}(\F_q)$) for which the $p$-rank (resp. $p$-rank of the associated Artin--Schreier cover) is $\tau$.
 \end{itemize}
 
 \par We shall compute the large $q$-limit:
 \begin{equation}\label{kappa}\mathfrak{d}'(p, g, \tau):=\lim_{q\rightarrow \infty} \left(\frac{\# \mathcal{S}_{g,\tau}(\F_q)}{\# \mathcal{S}_{g}(\F_q)}\right),\end{equation}where $g>0$ and $q$ ranges over all powers of $p$. From this, it is easy to calculate
 \begin{equation}\label{delta}\mathfrak{d}(p, g, \tau):=\lim_{q\rightarrow \infty} \left(\frac{\# \mathcal{AS}_{g,\tau}(\F_q)}{\# \mathcal{AS}_{g}(\F_q)}\right).\end{equation}

\section{Main Results}\label{s3}
\par We prove the main results of this paper. First, we treat the geometric problem, after which we study the arithmetic variant, and see that the densities match up. In the next section, we provide an example to illustrate some of the results.
\subsection{Geometric problem: fixed prime, large $q$-limit}
\par Fix a prime number $p$ and let $g=d\left(\frac{p-1}{2}\right)$. Assume throughout that $d>0$. In this section we study the large $q$-limits $\mathfrak{d}'(p, g, \tau)$ and $\mathfrak{d}(p, g, \tau)$ defined in the previous section (cf. \eqref{kappa}, \eqref{delta}). We introduce some further notation. Let $f(x)\in \mathcal{S}$, note that this means that each polynomial $f_\alpha(x)$ satisfies the conditions of Definition \ref{def2.2}. For $\alpha\neq \infty$, we write $f_\alpha(x)=\sum_{i=1}^{d_\alpha} c_{i,\alpha} x^i$, where the terms $c_{i, \alpha}=0$ for all terms $i$ that are divisible by $p$. Thus, it follows from condition \eqref{ad c3} that $d_\alpha\not \equiv 0\mod{p}$. Note that it is assumed that $f_\infty(x)$ is a constant. For $j\in \Z_{\geq 1}$ with $j\not \equiv 0\mod{p}$, we set $g_j(x)$ to be the product
\[g_j(x):=\left(\prod_{d_\alpha=j} (x-\alpha)\right),\]
and set $g(x):=\prod_j g_j(x)^{j}$. Note that if $\alpha$ and $\alpha'$ are conjugate, then, by Lemma \ref{d alpha}, $d_\alpha=d_{\alpha'}$. It follows that $g_j(x)\in \F_q[x]$.
\par We write $f(x)=\frac{h(x)}{g(x)}$, where $\op{deg} h(x)\leq  \op{deg} g(x)$. Note that the polynomials $g_j(x)$ are squarefree polynomials. Set $\kappa_j:=\op{deg} g_j(x)$, note that 
\[d+2=\sum_j (j+1) \kappa_j,\] hence, this gives rise to a partition of $d+2$. Here, $\kappa_j$ is the multiplicity of $j+1$ as a term in the partition, and $j\not \equiv  0\mod{p}$. We have set $r+1$ to be the total number of poles of $f(x)$, and this is given by 
\[r+1=\sum_j \op{deg} g_j(x)=\sum_j \kappa_j.\] Recall from the previous section that the $p$-rank of $\mathcal{C}_f$ is given by $\tau=r(p-1)$. For $i=1,2,\dots$, set \begin{equation}\label{def of lambda}\lambda_i:=\#\{j|\kappa_j\geq i\},\end{equation} note that $r+1$ is the total number of terms in the partition of $(d+2)$ and $\lambda_i$ the number of distinct terms with multiplicity $\geq i$. In the case when $\kappa_j=0$ for all $j>1$, we have that $d$ is even and $\kappa_1=\frac{d+2}{2}$. In this case, $r=\frac{d}{2}$ and the $p$-rank is equal to the genus. Thus, the curve is ordinary precisely when the degree at each pole is $1$. There is only one partition that corresponds to this case when $d$ is even, and no partitions when $d$ is odd.
\begin{Definition}
Let $g=d\left(\frac{p-1}{2}\right)>0$ and $r\geq 0$. Denote by $\Omega_p(r+1,d+2)$ the set of partitions of $d+2$ into $r+1$ numbers that are $\not\equiv 1\mod{p}$, and set $N_p(r+1, d+2):=\# \Omega_p(r+1,d+2)$. 
\end{Definition}
Represent a partition in $\Omega_p(r+1, d+2)$ by the \emph{multiplicity vector}, i.e., the tuple of integers $\kappa=(\kappa_1, \dots, \kappa_t)$, where (by \eqref{def of d})
\[\sum_{j=1}^t (j+1) \kappa_j=d+2.\] Thus for a partition of $(d+2)$ into $(r+1)$ numbers, $\kappa_j$ is the multiplicity of $(j+1)$. Note that $\kappa_j=0$ for all $j\not\equiv 0\mod{p}$. The number $t$ above is chosen to be the largest number for which $\kappa_t\neq 0$. The number of terms in the associated partition is \[r+1=\sum_j \kappa_j.\] The multiplicity vector of $\mathcal{C}_f$ is denoted $\kappa(f)$. Given a partition in $\Omega_p(r+1, d+2)$, we find that 
\[2(r+1)=\sum_j 2\kappa_j\leq \sum_j (j+1)\kappa_j=d+2.\] Therefore we find that $r\leq \lfloor d/2\rfloor$.

\par \begin{Remark}\label{s 3 remark}In \cite{pries2012p}, Pries and Zhu consider $\mathcal{AS}_g$, the moduli space of genus $g=d\left(\frac{p-1}{2}\right)$ Artin--Schreier curves and study the stratification of $\mathcal{AS}_g$ according to $p$-rank. Given $\tau=r(p-1)$, let $\mathcal{AS}_{g, \tau}$ be the stratum consisting of Artin--Screier curves of $p$-rank $\tau$. Theorem 1.1. of \emph{loc. cit.} shows that the set of irreducible components of $\mathcal{AS}_{g, \tau}$ is in bijection with the set of partitions $\{e_1, \dots, e_{r+1}\}$ of $(d+2)$ into $(r+1)$ positive numbers such that each $e_j\not \equiv 1\mod{p}$. Thus, the set of irreducible components of $\mathcal{AS}_{g,\tau}$ is in bijection with $\Omega_p(r+1, d+2)$, and thus there are $N_p(r+1, d+2)$ irreducible components in total. The irreducible component of $\mathcal{AS}_g$ corresponding to the partition $\{e_1, \dots, e_{r+1}\}$ has dimension equal to 
\[d-1-\sum_{j=1}^{r+1}\left\lfloor \frac{(e_j-1)}{p} \right\rfloor.\] Thus in particular, the dimension is maximal when $e_j\leq p$ for all $e_j$. In terms of the multiplicity vector, this means that $\kappa_j=0$ for all $j\geq p$. Analogous results are proved for moduli of Artin--Schreier covers, i.e., Artin--Schreier curves along with a map to $\mathbb{P}^1$. In section 3.1 of \emph{loc. cit.}, Pries and Zhu introduce $\mathcal{AS}\op{cov}_g$, the moduli of Artin--Schreier covers of $\mathbb{P}^1$. We note here that the dimension of $\mathcal{AS}\op{cov}_g$ is $3$ more than $\mathcal{AS}_g$.\end{Remark}

\begin{Lemma}\label{basic lemma 3.3}
Let $N\geq 2$ be an integer. The number of monic squarefree polynomials of degree $N$ with coefficients in $\F_q$ is $\left(1-\frac{1}{q}\right) q^N$.
\end{Lemma}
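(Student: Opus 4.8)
The plan is to exploit unique factorization in $\F_q[x]$ together with a short counting recursion. Write $S(N)$ for the number of monic squarefree polynomials of degree $N$, so that $S(0)=1$ and $S(1)=q$. The starting point is the observation that every monic polynomial $f\in\F_q[x]$ admits a \emph{unique} factorization $f=a^2b$ in which $a$ is monic and $b$ is monic and squarefree. Indeed, grouping the irreducible factors of $f$ and writing each exponent as $2q_\pi+r_\pi$ with $r_\pi\in\{0,1\}$, one sets $a=\prod_\pi \pi^{q_\pi}$ and $b=\prod_\pi \pi^{r_\pi}$; existence and uniqueness then follow at once from unique factorization into irreducibles.

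First I would record the count of \emph{all} monic polynomials in terms of $S$. Since $\op{deg}(a^2b)=2\op{deg}a+\op{deg}b$ and there are exactly $q^m$ monic polynomials of degree $m$, grouping the monic polynomials of degree $N$ according to $m=\op{deg}a$ yields the identity
\begin{equation*}
q^N=\sum_{m=0}^{\lfloor N/2\rfloor} q^m\, S(N-2m),
\end{equation*}
valid for every $N\geq 0$ under the convention $S(k)=0$ for $k<0$.

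Next I would extract $S(N)$ by a telescoping difference rather than by inverting a generating function. For $N\geq 2$, applying the displayed identity to $N-2$ and multiplying through by $q$ gives $q^{N-1}=\sum_{m\geq 1} q^m\, S(N-2m)$, where the summation index now starts at $m=1$. Subtracting this from the identity for $N$ cancels every term except the one with $m=0$, leaving $q^N-q^{N-1}=S(N)$, that is, $S(N)=\bigl(1-\tfrac1q\bigr)q^N$, as claimed. (Equivalently, one may package the recursion into the generating-function identity $\sum_N S(N)t^N=(1-qt^2)/(1-qt)$ and read off the coefficient of $t^N$.)

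The only step that requires genuine care—and hence the closest thing to an obstacle—is the unique factorization $f=a^2b$, since it is precisely the resulting bijection between monic polynomials and pairs consisting of a monic polynomial and a monic squarefree polynomial that powers the whole count. The hypothesis $N\geq 2$ enters at the telescoping step, which needs $N-2\geq 0$; this is not an artifact, since the excluded cases $S(0)=1$ and $S(1)=q$ are exactly where the formula $\bigl(1-\tfrac1q\bigr)q^N$ fails.
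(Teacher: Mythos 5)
Your argument is correct and complete. The paper itself does not prove this lemma at all: it simply cites the literature (Fulman, \emph{Theorem 2.2}), so there is no internal proof to compare against step by step. What you have written is the standard self-contained argument that underlies that citation: the unique factorization $f=a^2b$ of a monic polynomial into a square times a monic squarefree part (which you justify correctly via division-with-remainder on the exponents of the irreducible factors), the resulting convolution identity $q^N=\sum_{m=0}^{\lfloor N/2\rfloor}q^m S(N-2m)$, and the telescoping subtraction that isolates $S(N)=q^N-q^{N-1}$ for $N\geq 2$. Your telescoping step is a pleasantly elementary substitute for the usual generating-function inversion $\sum_N S(N)t^N=(1-qt^2)/(1-qt)$, and you correctly identify why the hypothesis $N\geq 2$ is needed --- the cases $N=0,1$ are exactly where the closed form fails, a point the paper also flags in the remark immediately following the lemma. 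In short: the proposal supplies a full elementary proof where the paper offers only a reference, and it is sound.
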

\begin{proof}
The result is well known, see \cite[Theorem 2.2]{fulman2016generating}.
\end{proof}
Note that when $N=0$ or $1$, the above formula does not apply. In fact, when $N\leq 1$, the number of monic polynomials of degree $N$ with coefficients in $\F_q$ is $q^N$. Such polynomials are clearly squarefree.
\begin{Lemma}\label{ul bounds}
Let $\kappa=(\kappa_1, \dots, \kappa_t)$ be a partition of $d+2$ as above such that $r+1=\sum_j \kappa_j$. Set $N(\kappa)$ to be the number of rational functions $h(x)/g(x)\in \F_q(x)$ such that
\begin{enumerate}
    \item $h(x)$ and $g(x)$ are coprime,
    \item $\op{deg} h(x)\leq \op{deg} g(x)$, 
    \item $g(x)=\prod_j g_j(x)^j$, where $g_j(x)$ is a monic squarefree polynomial of degree $\kappa_j$,
    \item for $i\neq j$, $g_i(x)$ and $g_j(x)$ are coprime.
\end{enumerate}
Then, we have that
\[\left(1-\frac{d+2-r}{q}\right)^{\lambda_1+1} q^{d+3}\leq N(\kappa)\leq \left(1-\frac{1}{q}\right)^{\lambda_2} q^{d+3},\]where $\lambda_i$ is the number of $\kappa_j$ that are $\geq i$, cf. \eqref{def of lambda}.
\end{Lemma}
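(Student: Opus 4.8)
The plan is to reduce the count to two essentially independent choices — the denominator $g(x)=\prod_j g_j(x)^j$ and the numerator $h(x)$ — and to bound each by sieving out the failures of the coprimality conditions (1) and (4). The basic bookkeeping is as follows: since $\sum_j(j+1)\kappa_j=d+2$ and $r+1=\sum_j\kappa_j$, the denominator has degree $\op{deg} g=\sum_j j\kappa_j=(d+2)-(r+1)=d+1-r$, so a numerator with $\op{deg} h\leq \op{deg} g$ ranges over a space of $q^{d+2-r}$ polynomials. As the tuple $(g_j)$ carries $\sum_j\kappa_j=r+1$ degrees of freedom, the total count has size $q^{(r+1)+(d+2-r)}=q^{d+3}$, which explains the main term; the two bounds differ only in how the coprimality constraints are counted. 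Throughout I will use the inequality $r\leq\lfloor d/2\rfloor$ proved above, which guarantees that every polynomial appearing in a coprimality condition has degree at most $d+1-r$, hence at most $d+2-r$.

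For the upper bound I discard the coprimality conditions (1) and (4), which can only enlarge the count. The remaining constraints force each $g_j$ to be monic squarefree of degree $\kappa_j$ and $h$ to have degree $\leq d+1-r$. By Lemma \ref{basic lemma 3.3} the number of monic squarefree polynomials of degree $\kappa_j$ is $(1-\tfrac1q)q^{\kappa_j}$ when $\kappa_j\geq 2$ and is exactly $q^{\kappa_j}$ when $\kappa_j\in\{0,1\}$; since the number of indices with $\kappa_j\geq 2$ is $\lambda_2$, the denominators contribute at most $(1-\tfrac1q)^{\lambda_2}q^{r+1}$. Multiplying by the $q^{d+2-r}$ choices of $h$ gives $N(\kappa)\leq (1-\tfrac1q)^{\lambda_2}q^{d+3}$.

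For the lower bound I will build the tuple one factor at a time and estimate each conditional count uniformly, so that the estimates multiply. The key sub-estimate is that for a fixed polynomial $F$ with $\op{deg} F\leq r$, the number of monic squarefree polynomials of degree $m\geq 1$ coprime to $F$ is at least $\bigl(1-\tfrac{d+2-r}{q}\bigr)q^{m}$: the number of monic degree-$m$ polynomials sharing a factor with $F$ is at most $\sum_{\pi\mid F}q^{m-\op{deg}\pi}\leq \tfrac{\op{deg} F}{q}q^m$ by a union bound over the distinct irreducible factors of $F$, and subtracting this from the squarefree count $(1-\tfrac1q)q^m$ (or $q^m$ when $m=1$) leaves at least $\bigl(1-\tfrac{1+\op{deg} F}{q}\bigr)q^m\geq \bigl(1-\tfrac{d+2-r}{q}\bigr)q^m$, the last step using $1+r\leq d+2-r$ (equivalently $r\leq\lfloor d/2\rfloor$). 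Ordering the $\lambda_1$ nonzero parts as $g_{j_1},\dots,g_{j_{\lambda_1}}$ and requiring $g_{j_\ell}$ to be coprime to $G_{\ell-1}:=g_{j_1}\cdots g_{j_{\ell-1}}$ (of degree $\leq r$), each factor contributes at least $\bigl(1-\tfrac{d+2-r}{q}\bigr)q^{\kappa_{j_\ell}}$. Finally, for any admissible choice of the $g_{j_\ell}$, a numerator $h$ of degree $\leq d+1-r$ is coprime to $g$ iff it is coprime to $G:=\prod_\ell g_{j_\ell}$ (of degree $r+1\leq d+1-r$), and the same union bound gives at least $\bigl(1-\tfrac{r+1}{q}\bigr)q^{d+2-r}\geq \bigl(1-\tfrac{d+2-r}{q}\bigr)q^{d+2-r}$ admissible $h$. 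Since all these bounds are uniform in the earlier choices, they telescope into a product of $\lambda_1+1$ factors, yielding $N(\kappa)\geq \bigl(1-\tfrac{d+2-r}{q}\bigr)^{\lambda_1+1}q^{d+3}$.

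The main obstacle is the lower bound, and specifically making the sequential estimate legitimately multiplicative: each conditional count depends on the factorization of the previously chosen polynomials, so I must verify that the sub-estimate holds \emph{uniformly} over those choices. This is exactly where $r\leq\lfloor d/2\rfloor$ enters — it forces every modulus $F$ (namely $G_{\ell-1}$ or $G$) to have degree small enough that the single denominator $d+2-r$ dominates both the squarefree defect $\tfrac1q$ and the sieve loss $\tfrac{\op{deg} F}{q}$ at once. The separate treatment of the degree-one factors, where Lemma \ref{basic lemma 3.3} does not apply, is a minor but necessary bookkeeping point.
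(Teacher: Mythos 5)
Your proof is correct and follows essentially the same route as the paper's: count the factors $g_{j}$ sequentially via Lemma \ref{basic lemma 3.3}, sieve out non-coprime choices with a union bound over irreducible factors, and then count numerators $h$ coprime to $g$ the same way, with $r\leq\lfloor d/2\rfloor$ absorbing all the sieve losses into the single constant $d+2-r$. The only difference is cosmetic — you bound the number of excluded irreducible factors by $\deg G_{\ell-1}\leq r$ where the paper uses the cruder $d+1-r$, and you are somewhat more explicit about the uniformity needed to multiply the conditional counts — but the bounds obtained are identical.
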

\begin{proof} Note that the degree of $g(x)$ is $(d+1-r)$. For $i=1,\dots, t$, we set 
\[\beta_i:=\begin{cases} 0 & \text{ if }\kappa_i\leq 0,\\
1 & \text{ if }\kappa_i>  0.
\end{cases} \text{ and }\xi_i:=\begin{cases} 0 & \text{ if }\kappa_i\leq 1,\\
1 & \text{ if }\kappa_i>  1
\end{cases}\]We observe that $\lambda_1=\sum_{i=1}^t \beta_i$ and $\lambda_2=\sum_{i=1}^t \xi_i$. By Lemma \ref{basic lemma 3.3}, the number of squarefree monic polynomials over $\F_q$ of degree $\kappa_1$ is $\left(1-\frac{1}{q}\right)^{\xi_1} q^{\kappa_1}$, and this is the number of choices for $g_1(x)$. For a given choice of $g_1(x)$, the number of choices for $g_2(x)$ that are coprime to $g_1(x)$ is $\leq \left(1-\frac{1}{q}\right)^{\xi_2} q^{\kappa_2}$. Assume that $\kappa_2>0$. Since there are at most $(d+1-r)$ factors that divide $g_1(x)$, the total number of choices for $g_2(x)$ is \[\geq \left(1-\frac{1}{q}\right) q^{\kappa_2}-(d+1-r) q^{\kappa_2-1}=\left(1-\frac{d+2-r}{q}\right)q^{\kappa_2}.\]
If $\kappa_2=0$, we find that the number of choices of $g_2(x)$ is $q^{\kappa_2}=1$. Combining the above statements, we find that the total number of choices for $g_2(x)$ is $\geq \left(1-\frac{d+2-r}{q}\right)^{\beta_2}q^{\kappa_2}$. Suppose we have made choices for $g_1(x),\dots, g_j(x)$, and would like to choose $g_{j+1}(x)$ to be coprime to $g_1(x),\dots, g_j(x)$. The same argument tells us that the number of choices for $g_{j+1}(x)$ lies between $ \left(1-\frac{d+2-r}{q}\right)^{\beta_{j+1}}q^{\kappa_{j+1}}$ and $\left(1-\frac{1}{q}\right)^{\xi_{j+1}} q^{\kappa_{j+1}}$. Therefore, we find that the number of choices for $g(x)$ lies in between \[ \left(1-\frac{d+2-r}{q}\right)^{\lambda_1} q^{r+1}\] and $\left(1-\frac{1}{q}\right)^{\lambda_2} q^{r+1}$. Note that $\op{deg}g(x)=d+1-r$ and $h(x)$ has degree $\leq \op{deg} g(x)$. We find that for a given choice of $g(x)$, the number of choices for $h(x)$ lies in between \[q^{d+2-r}-(d+1-r)q^{d+1-r}=\left(1-\frac{d+1-r}{q}\right)q^{d+2-r}\geq \left(1-\frac{d+2-r}{q}\right)q^{d+2-r}\] and $q^{d+2-r}$. Therefore, we obtain the bounds
\[\left(1-\frac{d+2-r}{q}\right)^{\lambda_1+1} q^{d+3}\leq  N(\kappa)\leq \left(1-\frac{1}{q}\right)^{\lambda_2} q^{d+3}.\]
\end{proof}
\begin{Definition}Let $\mathcal{S}_{g}^{\kappa}(\F_q)$ be the subset of $\mathcal{S}_g(\F_q)$ consisting of all $f(x)$ for which $\kappa(f)=\kappa$.
\end{Definition}
\par We review some standard notation. Given two functions $F(q)$ and $G(q)$, we recall that $F(q)\sim G(q)$ if $\lim_{q\rightarrow \infty} F(q)/G(q)=1$. In the limit above $q=p^n$, where $n\rightarrow \infty$. We write $F(q)=O\left(G(q)\right)$ if there is a constant $C>0$ such that $F(q)\leq C G(q)$ for \emph{all} values of $q$. We write $F(q)=o\left(G(q)\right)$ if $\lim_{q\rightarrow \infty} F(q)/G(q)=0$.
\par Next, we prove estimates for the size of $\mathcal{S}_g(\F_q)$. Note that if $\kappa_j=0$ for all $j\geq p$, then the associated Artin--Schreier curve lies on an irreducible component of $\mathcal{AS}_g$ of maximal dimension (cf. Remark \ref{s 3 remark}). The result below shows that $\mathcal{S}_g(\F_q)\sim q^{d+3}$ for partitions such that $\kappa_j=0$ for all $j\geq p$, and $\mathcal{S}_g(\F_q)=o\left( q^{d+3}\right)$ if $\kappa_j>0$ for some value of $j$ which is larger than $p$.
\begin{Lemma}\label{lemma 3.1}
Let $\kappa\in \Omega_p(r+1, d+2)$. Let $\lambda_1$ and $\lambda_2$ be the quantities defined in the previous section, cf. \eqref{def of lambda}. We have the following assertions:
\begin{enumerate}
    \item\label{lemma 3.1 p1} Suppose that $\kappa_j=0$ for all $j\geq p$. Then, we have that 
\[ \left(1-\frac{d+2}{q}\right)^{\lambda_1+1}  q^{d+3}\leq \# \mathcal{S}_{g}^{\kappa}(\F_q)\leq \left(1-\frac{1}{q}\right)^{\lambda_2} q^{d+3},\] in particular, as $q\rightarrow \infty$,
\[\# \mathcal{S}_{g}^{\kappa}(\F_q)\sim  q^{d+3}.\]
\item Suppose that $\kappa_j\neq 0$ for some $j\geq p$. Then, we have that
\[\# \mathcal{S}_{g}^{\kappa}(\F_q)= o(q^{d+3}).\]
\end{enumerate} 
\end{Lemma}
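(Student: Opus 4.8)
The plan is to convert the problem of counting admissible functions into one of counting pairs $(g,h)$, and to read off the effect of condition \eqref{ad c3} as a linear constraint whose codimension is governed by the pole orders.

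First I would make the correspondence precise. For $f\in\mathcal{S}_{g}^{\kappa}(\F_q)$, write $f=h/g$ with $g=\prod_j g_j(x)^j$ as above; partial fractions give $f=c_0+\sum_\alpha\sum_{i=1}^{d_\alpha}c_{i,\alpha}(x-\alpha)^{-i}$. Here conditions \eqref{ad c1} and \eqref{ad c2} amount to $\op{deg}h\le\op{deg}g$ (with $c_0=f_\infty$), while $\gcd(h,g)=1$ encodes that the pole order at each root of $g_j$ is exactly $j$, i.e. $c_{j,\alpha}\neq0$. The assignment $h\mapsto(c_0,(c_{i,\alpha}))$ is an $\F_q$-linear, Galois-equivariant isomorphism of $\{\op{deg}h\le\op{deg}g\}$ onto the Galois-stable tuples, so condition \eqref{ad c3}, namely the vanishing $c_{i,\alpha}=0$ whenever $p\mid i$, carves out an $\F_q$-subspace. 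The governing quantity is $M:=\sum_j\kappa_j\lfloor j/p\rfloor$, the number of such vanishing conditions, and by Galois descent this subspace has $\F_q$-codimension exactly $M$. Note that $M=0$ holds precisely when every pole order $j$ is $<p$, that is, when $\kappa_j=0$ for all $j\geq p$.

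For part (1) we have $M=0$, so condition \eqref{ad c3} is vacuous and the correspondence becomes a bijection between $\mathcal{S}_{g}^{\kappa}(\F_q)$ and the set of $h/g$ satisfying the four conditions of Lemma \ref{ul bounds}; hence $\#\mathcal{S}_{g}^{\kappa}(\F_q)=N(\kappa)$. The displayed bounds are then immediate from Lemma \ref{ul bounds}, after weakening $1-\frac{d+2-r}{q}$ to the smaller $1-\frac{d+2}{q}$ in the lower bound (valid since $r\geq0$). As $d,r,\lambda_1,\lambda_2$ are fixed and $(1-C/q)^m\to1$ as $q\to\infty$, squeezing gives $\#\mathcal{S}_{g}^{\kappa}(\F_q)\sim q^{d+3}$. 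For part (2) at least one pole order is $\geq p$, so $M\geq1$. I would count by first choosing $g$, for which there are at most $q^{\sum_j\kappa_j}=q^{r+1}$ monic possibilities, and then for fixed $g$ bounding the admissible $h$: dropping the coprimality condition only increases the count, and what remains is the $\F_q$-subspace of codimension $M$, of size at most $q^{(d+2-r)-M}$. Multiplying yields $\#\mathcal{S}_{g}^{\kappa}(\F_q)\leq q^{r+1}\cdot q^{(d+2-r)-M}=q^{d+3-M}\leq q^{d+2}=o(q^{d+3})$.

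The step needing the most care is the assertion that condition \eqref{ad c3} reduces the $\F_q$-dimension by exactly $M$. The poles $\alpha$ are in general not $\F_q$-rational but occur in Galois orbits (precisely the content of Lemma \ref{d alpha}), so the $c_{i,\alpha}$ are not individually $\F_q$-valued; one must check that the conditions $c_{i,\alpha}=0$ for $p\mid i$, although defined over $\bar{\F}_p$, are Galois-stable and hence descend to independent $\F_q$-linear conditions on $h$, the orbit sizes summing to give codimension $\sum_\alpha\lfloor d_\alpha/p\rfloor=M$. The fact that the partial-fraction identification is $\F_q$-linear and Galois-equivariant is what makes this descent routine.
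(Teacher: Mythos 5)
Your treatment of part (1) coincides with the paper's: both observe that when every pole order is $<p$, condition \eqref{ad c3} of admissibility is vacuous, so $\#\mathcal{S}_g^{\kappa}(\F_q)=N(\kappa)$ and the bounds follow from Lemma \ref{ul bounds} after weakening $1-\frac{d+2-r}{q}$ to $1-\frac{d+2}{q}$. For part (2) you take a genuinely different route. The paper fixes a pole $\alpha$ of order $j>p$, forms $\eta(x)=\sum_{\alpha'}(x-\alpha')^{-1}$ over the conjugates of $\alpha$, and notes that the $q$ functions $f_c=f+c\,\eta^p$ ($c\in\F_q$) are pairwise distinct, all satisfy the hypotheses of Lemma \ref{ul bounds}, and only $f_0=f$ is admissible; dividing the bound $N(\kappa)\leq\left(1-\frac{1}{q}\right)^{\lambda_2}q^{d+3}$ by $q$ then gives $O(q^{d+2})$. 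You instead make the underlying linear algebra explicit: the partial-fraction map identifies $\{h:\op{deg}h\leq\op{deg}g\}$ with coefficient tuples, condition \eqref{ad c3} becomes a Galois-stable system of $M=\sum_j\kappa_j\lfloor j/p\rfloor$ independent linear conditions (the functionals $c_{i,\alpha}$ are coordinates of an isomorphism, and the system is permuted by $\op{Gal}(\bar{\F}_p/\F_q)$ by Lemma \ref{d alpha}, so it descends to an $\F_q$-subspace of codimension exactly $M$), and the crude count of at most $q^{r+1}$ choices of $g$ times $q^{d+2-r-M}$ choices of $h$ gives $q^{d+3-M}=o(q^{d+3})$ since $M\geq 1$. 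Your argument is correct and in fact yields a sharper bound, quantifying the saving by the full codimension $M$ rather than a single factor of $q$; the paper's injection trick is more elementary but is essentially exhibiting one transverse direction to the same constraint subspace. The descent step you flag is indeed the only delicate point, and your justification is adequate.
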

\begin{proof}
Recall that we have expressed $f(x)=h(x)/g(x)$, where $g(x)=\prod_j g_j(x)^j$ and $\kappa_j=\op{deg} g_j(x)$, where $g_j(x)$ is a squarefree polynomial with coefficients in $\F_q$. Assume without loss of generality that $g_j(x)$ is a monic polynomial for all $j$. Note that the degree of $g(x)$ is \[\sum_j j \kappa_j=d+2-(r+1)=d-r+1.\] Since $f_{\infty}(x)$ is a constant, we find that $\op{deg} h(x)\leq \op{deg} g(x)$. Furthermore, note that by construction, the polynomials $g_i(x)$ and $g_j(x)$ are coprime for $i\neq j$.

\par First, we consider the case when $\kappa_j=0$ for all $j\geq p$. In this case, for every pole $\alpha$ of $f(x)$, we have that $d_{\alpha}<p$. Thus, the condition \eqref{ad c3}, which requires that the coefficient of $x^j$ be zero in $f_\alpha(x)$ whenever $p|j$, is automatically satisfied. Furthermore, $h(x)$ is coprime to $g(x)$. Therefore, the result in this case follows from Lemma \ref{ul bounds}. In greater detail, the lower bound proceeds from the following observation \[\left(1-\frac{d+2}{q}\right)\leq \left(1-\frac{d+2-r}{q}\right).\]
\par Next, assume that for some $j>p$, we have that $\kappa_j\neq 0$. In this case, the same calculation goes through with the one difference being that there is an additional constraint requiring that the coefficient of $x^{j}$ in $f_\alpha(x)$ is $0$ for all $j>0$ such that $p|j$. This condition was automatically satisfied in the previous case. In this case, this gives an additional condition on $h(x)$, which forces the number of choices to be
\[\leq \left(1-\frac{1}{q}\right)^{\lambda_2} q^{d+2}=o(q^{d+3}).\] Let us explain this in greater detail. 

\par Since $\kappa_j\neq 0$ for some $j>p$, there exists a pole $\alpha$ of $f(x)$ such that $d_{\alpha}=j>p$. The coefficient of $x^p$ in $f_\alpha(x)$ is required to be $0$. Set $\eta(x)\in \F_q(x)$ to denote the function given by 
\[\eta(x):=\sum_{\alpha'} \frac{1}{(x-\alpha')},\]where $\alpha'$ ranges over the $\F_q$-conjugates of $\alpha$. Note that $\eta(x)^p=\sum_{\alpha'} \frac{1}{(x-\alpha')^p}$. This function $\eta(x)^p$ has coefficients in $\F_q$ and is expressed as $\eta(x)^p=\frac{\eta_1(x)}{\eta_2(x)}$, with \[\op{deg} \eta_1(x)\leq \op{deg} \eta_2(x)\leq \kappa_j.\] Moreover, note that $\eta_2(x)$ divides $\prod_{\alpha'} (x-\alpha')^p$, and hence divides $\prod_{\alpha'} (x-\alpha')^j$. Since this latter product divides $g(x)$, it follows that $\eta_2(x)$ divides $g(x)$. For any choice of $0\neq c\in \F_q$, the function $f_c(x):=f(x)+c \eta(x)^p$ does not satisfy condition \eqref{ad c3} since now the coefficient of $x^p$ in $f_{c,\alpha}(x)$ is nonzero, and $p<j=d_\alpha$. We may express $f_c(x)=h_c(x)/g(x)$, where 
\[\op{deg} h_c(x)\leq \op{deg} g(x).\] \par The number of functions $f(x)=h(x)/g(x)$ satisfying the conditions of Lemma \ref{ul bounds} is $\leq \left(1-\frac{1}{q}\right)^{\lambda_2} q^{d+3}$.
\par On the other hand, from an admissible function $f(x)=\frac{h(x)}{g(x)}$, we have constructed $q$ new functions $f_c(x)$, as $c$ ranges over $\F_q$. Only one of which is admissible (i.e., when $c=0$). It is easy to see that if $f(x)$ and $F(x)$ are distinct admissible functions, then $f_c(x)\neq F_{c'}(x)$ for all $c,c'$. Hence, the total number of admissible functions for the partition vector $\kappa$ is \[\leq \frac{1}{q}\left(\left(1-\frac{1}{q}\right)^{\lambda_2} q^{d+3}\right)=O(q^{d+2})=o(q^{d+3}).\]
\end{proof}
\begin{Definition}
Let $M_p(r+1, d+2)$ be the number of partitions $\kappa\in \Omega_p(r+1, d+2)$ such that $\kappa_j=0$ unless $1\leq j<p$. Set \[M_p(d+2):=\sum_{r=0}^{\lfloor d/2\rfloor} M_p(r+1,d+2).\]
\end{Definition}

\begin{Lemma}\label{Mp nonzero}
With respect to notation above, we have that $M_p(d+2)=0$ if and only if $p=2$ and $d$ is odd.
\end{Lemma}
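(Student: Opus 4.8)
The plan is to reinterpret $M_p(d+2)$ purely in terms of ordinary partitions and then argue by cases on $p$. First I would observe that in the multiplicity-vector description, a partition contributing to $M_p(r+1,d+2)$ has $\kappa_j$ equal to the multiplicity of the part $(j+1)$, and the defining constraint $\kappa_j=0$ unless $1\le j<p$ says precisely that every part $e=j+1$ lies in the range $2\le e\le p$. For such an $e$ the membership condition of $\Omega_p$, namely $e\not\equiv 1\pmod{p}$, is automatic, since $2\le e\le p<p+1$ forces $e\not\equiv 1\pmod{p}$. Summing over $r$ then identifies $M_p(d+2)$ with the total number of (unordered) partitions of $d+2$ whose parts all lie in the set $\{2,3,\dots,p\}$. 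This reformulation is the crux, and with it the statement becomes an elementary question about representability of $d+2$ by such parts.

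Second, I would treat the case $p=2$. Here $\{2,3,\dots,p\}=\{2\}$, so a partition of $d+2$ with all parts equal to $2$ exists if and only if $d+2$ is even, equivalently $d$ is even; in that case there is exactly one such partition and $M_2(d+2)=1$, while if $d$ is odd no partition exists and $M_2(d+2)=0$. This already yields the ``if'' direction and pins down the $p=2$ behaviour completely.

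Third, for an odd prime $p\ge 3$ the allowed set $\{2,3,\dots,p\}$ contains both $2$ and $3$. Since the numerical semigroup generated by $2$ and $3$ consists of every integer $\ge 2$ (explicitly, an even $n\ge 2$ is a sum of $2$'s and an odd $n\ge 3$ is a $3$ plus a sum of $2$'s), and since the hypothesis $d>0$ gives $d+2\ge 3$, the integer $d+2$ always admits at least one partition into parts from $\{2,3\}\subseteq\{2,\dots,p\}$. Hence $M_p(d+2)\ge 1$ for every $p\ge 3$ and every $d\ge 1$, so $M_p(d+2)\neq 0$. Combining the two cases gives $M_p(d+2)=0$ exactly when $p=2$ and $d$ is odd.

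Finally, I expect the only delicate point to be the bookkeeping in the reformulation step, namely confirming that the $\Omega_p$ constraint is vacuous on the restricted range of parts and that the boundary values (the exclusion of $d=0$, and the fact that the endpoint part $e=p$ is genuinely allowed) are handled correctly. Once the reduction to ``partitions of $d+2$ into parts in $\{2,\dots,p\}$'' is in place, no further obstacle arises.
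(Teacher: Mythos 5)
Your proof is correct. The paper itself omits the proof of this lemma (declaring it straightforward), and your argument supplies exactly the natural one: the reduction of $M_p(d+2)$ to counting partitions of $d+2$ into parts from $\{2,\dots,p\}$ is valid (the congruence condition $e\not\equiv 1\pmod p$ is indeed vacuous on that range, and the bound $r\le\lfloor d/2\rfloor$ excludes no partitions since every part is at least $2$), and the case split --- parts $\{2\}$ when $p=2$ versus the numerical semigroup generated by $2$ and $3$ when $p\ge 3$ --- settles both directions.
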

\begin{proof}
The proof of the result is rather straightforward and not particularly interesting, hence it is omitted.
\end{proof}
\begin{Remark}\label{remark before main theorem}
We note here that by the discussion in Remark \ref{s 3 remark}, $M_p(d+2)$ (resp. $M_p(r+1, d+2)$) is equal to the number of irreducible components of $\mathcal{AS}_g$ (resp. $\mathcal{AS}_{g, \tau}$) of dimension $d-1$. These are the components of maximal dimension. The fraction $\frac{M_p(d+2)}{M_p(r+1,d+2)}$ plays a role in the next result. It follows from the aforementioned assertions that this fraction is the proportion of irreducible components of maximal dimension in $\mathcal{AS}_g$ parametrizing Artin--Schreier covers with $p$-rank $\tau$. Thus, the Theorem below has a suitable geometric interpretation.
\end{Remark}

\begin{Th}\label{main}
Fix a prime number $p$ and let $r,d\geq 0$ such that $r\leq \lfloor d/2\rfloor$. Assume that $d$ is even when $p=2$. Set $g=d\left(\frac{p-1}{2}\right)$ and $\tau=r(p-1)$, and consider the limits $\mathfrak{d}'(p,g,\tau)$ (cf. \eqref{kappa}) and $\mathfrak{d}(p,g,\tau)$ (cf. \eqref{delta}). With respect to notation above, we have that 
\[\mathfrak{d}'(p,g,\tau)=\mathfrak{d}(p,g,\tau)=\frac{M_p(r+1, d+2)}{M_p(d+2)}.\]
\end{Th}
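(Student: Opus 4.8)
The plan is to decompose the counting sets according to the multiplicity vector $\kappa$ and then invoke Lemma \ref{lemma 3.1} to isolate the partitions that contribute at the leading order $q^{d+3}$. First I would record that fixing the $p$-rank $\tau=r(p-1)$ is equivalent to fixing the number of poles $r+1=\sum_j\kappa_j$, so that $\mathcal{S}_g(\F_q)$ decomposes as a disjoint union $\bigsqcup_\kappa \mathcal{S}_g^{\kappa}(\F_q)$ over all partitions $\kappa$ of $d+2$ admissible for some $r$, while $\mathcal{S}_{g,\tau}(\F_q)=\bigsqcup_{\kappa\in\Omega_p(r+1,d+2)}\mathcal{S}_g^{\kappa}(\F_q)$. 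Crucially, the number of partitions of $d+2$ is finite and independent of $q$, so these are finite disjoint unions and the limit $q\rightarrow\infty$ may be taken term by term.

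Next I would apply Lemma \ref{lemma 3.1}, whose two cases say precisely that $\#\mathcal{S}_g^{\kappa}(\F_q)\sim q^{d+3}$ when $\kappa_j=0$ for all $j\geq p$, and $\#\mathcal{S}_g^{\kappa}(\F_q)=o(q^{d+3})$ otherwise. Summing over the finitely many $\kappa\in\Omega_p(r+1,d+2)$, the partitions with $\kappa_j=0$ for $j\geq p$ number $M_p(r+1,d+2)$ by definition, so
\[\#\mathcal{S}_{g,\tau}(\F_q)=M_p(r+1,d+2)\,q^{d+3}+o(q^{d+3})\sim M_p(r+1,d+2)\,q^{d+3}.\]
Summing instead over all $\kappa$ (equivalently, over all $r$) gives $\#\mathcal{S}_g(\F_q)\sim M_p(d+2)\,q^{d+3}$. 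Under the hypothesis that $d$ is even when $p=2$, Lemma \ref{Mp nonzero} guarantees $M_p(d+2)\neq 0$, so dividing yields $\mathfrak{d}'(p,g,\tau)=M_p(r+1,d+2)/M_p(d+2)$.

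Finally, to pass from $\mathfrak{d}'$ to $\mathfrak{d}$ I would analyze the fibers of the surjection $\mathcal{S}_g(\F_q)\rightarrow\mathcal{AS}_g(\F_q)$. By the last assertion of Proposition \ref{boring prop}, two admissible functions define isomorphic covers if and only if they differ by a scalar $u\in(\Z/p\Z)^\times$. Since scaling by $u$ preserves admissibility and fixes the set of poles together with their orders (hence both the multiplicity vector and the $p$-rank), and since $uf=f$ forces $u=1$ for the nonconstant $f$, the group $(\Z/p\Z)^\times$ acts freely on $\mathcal{S}_g(\F_q)$ with orbits equal to the fibers of the map. Therefore $\#\mathcal{AS}_g(\F_q)=\frac{1}{p-1}\#\mathcal{S}_g(\F_q)$ and likewise $\#\mathcal{AS}_{g,\tau}(\F_q)=\frac{1}{p-1}\#\mathcal{S}_{g,\tau}(\F_q)$; the common factor $(p-1)$ cancels in the ratio, giving $\mathfrak{d}(p,g,\tau)=\mathfrak{d}'(p,g,\tau)$. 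The only genuinely delicate point is the interchange of the limit with the sum, which is justified precisely because the index set of partitions is finite and fixed independently of $q$; everything else is bookkeeping resting on Lemmas \ref{lemma 3.1} and \ref{Mp nonzero} and Proposition \ref{boring prop}.
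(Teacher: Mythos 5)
Your proposal is correct and follows essentially the same route as the paper's own proof: decompose $\mathcal{S}_{g,\tau}(\F_q)$ over the finitely many partitions in $\Omega_p(r+1,d+2)$, apply Lemma \ref{lemma 3.1} to see that exactly the $M_p(r+1,d+2)$ partitions with $\kappa_j=0$ for $j\geq p$ contribute at order $q^{d+3}$, use Lemma \ref{Mp nonzero} for the nonvanishing of the denominator, and pass from $\mathfrak{d}'$ to $\mathfrak{d}$ via the $(p-1)$-to-one fibers supplied by Proposition \ref{boring prop}. The extra care you take in justifying the term-by-term limit and the freeness of the $(\Z/p\Z)^\times$-action is a welcome elaboration of steps the paper leaves implicit, but it is not a different argument.
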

\begin{proof}
Since it is assumed that $d$ is even for $p=2$, it follows from Lemma \ref{Mp nonzero} that $M_p(d+2)$ is not equal to $0$. Note that
\[\# \mathcal{S}_{g,\tau}(\F_q)=\sum_{\kappa \in \Omega_p(r+1,d+2)} \# \mathcal{S}_{g}^\kappa(\F_q),\] where we recall that $g=d\left(\frac{p-1}{2}\right)$ and $\tau= r(p-1)$. It follows from Lemma \ref{lemma 3.1} that $\# \mathcal{S}_{g,\tau}(\F_q)\sim M_p(r+1, d+2)q^{d+3}$. Likewise, we have that $\# \mathcal{S}_{g}(\F_q)\sim M_p( d+2)q^{d+3}$, and it thus follows that 
\[\mathfrak{d}'(p,g,\tau)=\frac{M_p(r+1, d+2)}{M_p(d+2)}.\] 
\par On the other hand, if $f$ and $g$ are both admissible, then, by Proposition \ref{boring prop}, \[\mathcal{C}_f\simeq_{\mathbf{P}^1} \mathcal{C}_g\Leftrightarrow f=ug\text{ for }u\in (\Z/p\Z)^\times.\] Hence, each isomorphism class of covers (unramified at $\infty$) consists of exactly $(p-1)$ admissible functions. As a result, 
\[\# \mathcal{AS}_{g,\tau}(\F_q)\sim \frac{M_p(r+1, d+2)}{p-1}q^{d+3} \text{ and }\# \mathcal{AS}_{g}(\F_q)\sim \frac{M_p( d+2)}{p-1}q^{d+3},\] and consequently, $\mathfrak{d}(p,g, \tau)=\mathfrak{d}'(p, g, \tau)$.
\end{proof}

Note that $\mathfrak{d}(p,g, \tau)$ depends on $p$.

\begin{Corollary}\label{cor 3.10}
Fix a prime number $p$ and let $r,d\geq 0$ such that $r\leq \lfloor d/2\rfloor$. Assume that $d$ is even when $p=2$. Suppose that $(r+1)p<d+2$, then, $\mathfrak{d}(p, g, \tau)=0$. In particular, if $p<d+2$, then, the proportion of isomorphism classes of Artin--Schreier covers of genus $g=d\left(\frac{p-1}{2}\right)$ and $p$-rank $\tau=0$ is zero in the large $q$-limit.
\end{Corollary}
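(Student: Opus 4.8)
The plan is to deduce this directly from Theorem \ref{main}, which identifies $\mathfrak{d}(p,g,\tau)$ with the ratio $M_p(r+1,d+2)/M_p(d+2)$. Since the standing hypothesis that $d$ is even when $p=2$ guarantees, via Lemma \ref{Mp nonzero}, that the denominator $M_p(d+2)$ is nonzero, it suffices to show that the numerator $M_p(r+1,d+2)$ vanishes under the assumption $(r+1)p<d+2$.

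To this end I would recall the combinatorial meaning of $M_p(r+1,d+2)$: it counts the partitions $\kappa\in\Omega_p(r+1,d+2)$ for which $\kappa_j=0$ whenever $j\geq p$. Unwinding the correspondence between multiplicity vectors and partitions, such a $\kappa$ records a partition of $d+2$ into exactly $r+1$ parts, each part being of the form $j+1$ with $1\leq j<p$; that is, each part lies in the range $\{2,3,\dots,p\}$. In particular every part is at most $p$, while the number of parts is $r+1=\sum_j\kappa_j$ and the total weight is $d+2=\sum_j(j+1)\kappa_j$.

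The key step is then a one-line estimate: since there are $r+1$ parts and each is bounded above by $p$, we obtain
\[d+2=\sum_j(j+1)\kappa_j\leq p\sum_j\kappa_j=(r+1)p.\]
Under the hypothesis $(r+1)p<d+2$ this is a contradiction, so no such partition exists and hence $M_p(r+1,d+2)=0$. Combined with $M_p(d+2)\neq 0$, Theorem \ref{main} yields $\mathfrak{d}(p,g,\tau)=0$.

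For the final assertion, observe that the $p$-rank $\tau=0$ forces $r=0$, since $\tau=r(p-1)$; whence $r+1=1$ and the condition $(r+1)p<d+2$ reduces simply to $p<d+2$. Applying the first part then gives the claim. As the whole argument is an elementary counting bound resting on Theorem \ref{main}, I do not anticipate a genuine obstacle; the only point requiring care is the correct translation of the constraint ``$\kappa_j=0$ for $j\geq p$'' into the statement that every part of the partition is at most $p$, together with the reminder that $M_p(d+2)$ is nonzero precisely because of the parity hypothesis on $d$.
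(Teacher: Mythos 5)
Your argument is correct and follows the paper's own proof: both reduce to showing $M_p(r+1,d+2)=0$ because a partition of $d+2$ into $r+1$ parts each at most $p$ would force $d+2\leq (r+1)p$, contradicting the hypothesis, and then invoke Theorem \ref{main}. You simply spell out the translation from multiplicity vectors to parts and the $r=0$ specialization in more detail than the paper does.
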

\begin{proof}
Note that when $(r+1)p<d+2$, it is not possible to partition $d+2$ into $r+1$ numbers, all of which are $\leq p$. In this case, $M_p(r+1, d+2)=0$, and the result follows from Theorem \ref{main}.
\end{proof}

\begin{Remark}
It follows from Remark \ref{s 3 remark} that since $M_p(1,d+2)=0$, the irreducible components of the stratum of $\mathcal{AS}_g$ for which $\tau=0$ all have dimension $<(d-1)$. These components contain fewer points than other components of $\mathcal{AS}_g$ with dimension $(d-1)$.
\end{Remark}
\subsection{Arithmetic problem: large $p$-limit}
\par We now study the \emph{arithmetic} variant of the problem considered in the previous section. Many of the calculations are similar in spirit. Fix $d>0$, $r\geq 0$, and let $g=g(d,p)=d\left(\frac{p-1}{2}\right)$, $\tau=\tau(r,p):=r(p-1)$. Note that $g$ and $\tau$ depend on $p$ and are increasing at fixed speeds as $p\rightarrow \infty$. This is suppressed in the notation.

\par Set $\mathfrak{p}'(d, r)$ to be the limit 
\begin{equation}\label{def p prime}\mathfrak{p}'(d,r):=\lim_{p\rightarrow \infty} \frac{\#\mathcal{S}_{g}^{\tau}(\F_p)}{\#\mathcal{S}_{g}(\F_p)},\end{equation} and 
set 
\begin{equation}\label{def p frak}\mathfrak{p}(d,r):=\lim_{p\rightarrow \infty} \frac{\#\mathcal{AS}_{g}^{\tau}(\F_p)}{\#\mathcal{AS}_{g}(\F_p)},\end{equation}
with $g=g(d,p)$ and $\tau=\tau(r,p)$.
\begin{Definition}
Let $\Theta(r+1, d+2)$ be the set of all partitions of $d+2$ into $r+1$ numbers that are all $\geq 2$. Denote by $\Theta(d+2)$ the set of all partitions of $d+2$ into numbers $\geq 2$. We set $T(r+1, d+2):=\# \Theta(r+1, d+2)$ and $T(d+2):=\#\Theta(d+2)$.
\end{Definition}
It is clear that $T(d+2)\neq 0$. A partition in $\Theta(r+1, d+2)$ is represented by a partition vector $\kappa=(\kappa_1, \dots, \kappa_t)$, where $\kappa_j$ is the multiplicity of $(j+1)$ in the partition. There is no dependence on $p$ in the definition of $T(r+1,d+2)$. Note that 
\begin{equation}T(d+2)=\sum_{r=0}^{\lfloor d/2\rfloor} T(r+1,d+2).\end{equation}
\begin{Lemma}\label{lemma 3.2}
Let $\kappa\in \Theta(r+1, d+2)$ and suppose that $\kappa_j=0$ for all $j\geq p$. Let $\lambda_1$ and $\lambda_2$ be the quantities defined in \eqref{def of lambda}. Then, we have that 
\[\left(1-\frac{d+2}{p}\right)^{\lambda_1+1}  p^{d+3}\leq \# \mathcal{S}_{g}^{\kappa}(\F_p)\leq \left(1-\frac{1}{p}\right)^{\lambda_2} p^{d+3}.\] 
\end{Lemma}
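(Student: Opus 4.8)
The plan is to reduce the statement to a direct application of Lemma \ref{ul bounds} with $q=p$, in exactly the manner of part \eqref{lemma 3.1 p1} of Lemma \ref{lemma 3.1}. The essential observation is that the hypothesis $\kappa_j=0$ for all $j\geq p$ forces condition \eqref{ad c3} to hold automatically, so that admissibility imposes nothing beyond the coprimality and degree conditions already enumerated by $N(\kappa)$.

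First I would record the local structure. Writing $f(x)=h(x)/g(x)$ with $g(x)=\prod_j g_j(x)^j$, where $g_j(x)$ is the monic squarefree polynomial of degree $\kappa_j$ whose roots are the poles of order $j$, the assumption $\kappa_j=0$ for $j\geq p$ says that every pole $\alpha$ of $f$ has order $d_\alpha<p$. Since $1\leq d_\alpha\leq p-1$, we have $d_\alpha\not\equiv 0\bmod p$, and each local polynomial $f_\alpha(x)$ has degree $<p$; hence no exponent $j\geq 1$ divisible by $p$ occurs in $f_\alpha(x)$, so \eqref{ad c3} is vacuous (the constant term being excluded separately by \eqref{ad c2}). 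Unramifiedness at $\infty$ gives $\op{deg}h\leq \op{deg}g$ via \eqref{ad c1}, while by construction $h$ is coprime to $g$ and the factors $g_i,g_j$ are pairwise coprime for $i\neq j$. Consequently the admissible functions with multiplicity vector $\kappa$ are precisely the rational functions enumerated by $N(\kappa)$ in Lemma \ref{ul bounds}, so that $\#\mathcal{S}_{g}^{\kappa}(\F_p)=N(\kappa)$.

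Applying Lemma \ref{ul bounds} with $q=p$ then yields
\[
\left(1-\frac{d+2-r}{p}\right)^{\lambda_1+1} p^{d+3}\leq \#\mathcal{S}_{g}^{\kappa}(\F_p)\leq \left(1-\frac{1}{p}\right)^{\lambda_2} p^{d+3}.
\]
Since $r\geq 0$ we have $1-\frac{d+2}{p}\leq 1-\frac{d+2-r}{p}$, and raising to the positive power $\lambda_1+1$ preserves this inequality, so replacing the lower bound accordingly produces the asserted estimate. There is no genuine obstacle here beyond this bookkeeping: the argument is a verbatim specialization of the $q=p$ instance of Lemma \ref{lemma 3.1}\eqref{lemma 3.1 p1}, and Lemma \ref{basic lemma 3.3} (together with the $N\leq 1$ cases handled in the proof of Lemma \ref{ul bounds}) supplies the squarefree counts uniformly in $p$. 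The only step deserving care is the identification $\#\mathcal{S}_{g}^{\kappa}(\F_p)=N(\kappa)$, which rests on the fact that for parts $j+1$ in the range $2\leq j+1\leq p$ the defining condition of $\Theta(r+1,d+2)$ (parts $\geq 2$) coincides with the admissibility constraint $j\not\equiv 0\bmod p$; this is what guarantees that each $\kappa\in\Theta(r+1,d+2)$ with $\kappa_j=0$ for $j\geq p$ is genuinely realized by admissible functions and that the partial-fraction assignment $f\mapsto(h,g)$ is a bijection onto the set counted in Lemma \ref{ul bounds}.
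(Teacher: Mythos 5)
Your proposal is correct and follows the same route as the paper, which simply cites the first part of Lemma \ref{lemma 3.1} (itself a specialization of Lemma \ref{ul bounds}); you merely unpack that reduction, including the worthwhile observation that a partition in $\Theta(r+1,d+2)$ with all parts $\leq p$ automatically lies in $\Omega_p(r+1,d+2)$.
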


\begin{proof}
The result follows from the first part of Lemma \ref{lemma 3.1}.
\end{proof}
\begin{Corollary}\label{last boring corollary}
Let $\kappa\in \Theta(r+1, d+2)$, and $g=g(d,p)$, $\tau=\tau(r,p)$. We have that \[\# \mathcal{S}_{g}^{\kappa}(\F_p)\sim p^{d+3},\] as $p\rightarrow \infty$.
\end{Corollary}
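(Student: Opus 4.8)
The plan is to deduce this directly from Lemma~\ref{lemma 3.2} together with a squeeze argument, the only genuine point being to verify that the hypothesis of that lemma---namely that $\kappa_j=0$ for all $j\geq p$---is automatically satisfied once $p$ is sufficiently large. Since $d$ is fixed while $p\to\infty$, this condition becomes vacuous in the limit, which is precisely what distinguishes the arithmetic regime from the geometric one, where the second ($o(q^{d+3})$) case of Lemma~\ref{lemma 3.1} could occur.

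First I would record the boundedness observation. The vector $\kappa=(\kappa_1,\dots,\kappa_t)$ is the multiplicity vector of a \emph{fixed} partition of the fixed integer $d+2$ into parts all $\geq 2$. The largest index $t$ with $\kappa_t\neq 0$ corresponds to the largest part $t+1$ of this partition, and since every part is at most $d+2$ we obtain $t+1\leq d+2$, i.e.\ $t\leq d+1$. Consequently, as soon as $p\geq d+2$ we have $\kappa_j=0$ for every $j\geq p$, so the hypothesis of Lemma~\ref{lemma 3.2} holds for all but finitely many primes $p$.

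For such $p$, Lemma~\ref{lemma 3.2} supplies
\[\left(1-\frac{d+2}{p}\right)^{\lambda_1+1}p^{d+3}\leq \#\mathcal{S}_{g}^{\kappa}(\F_p)\leq \left(1-\frac{1}{p}\right)^{\lambda_2}p^{d+3}.\]
Because $\kappa$ is held fixed, the integers $\lambda_1$ and $\lambda_2$ of \eqref{def of lambda} are constants independent of $p$ (indeed $\lambda_2\leq \lambda_1\leq r+1$), so both $\left(1-\tfrac{d+2}{p}\right)^{\lambda_1+1}$ and $\left(1-\tfrac{1}{p}\right)^{\lambda_2}$ converge to $1$ as $p\to\infty$. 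Dividing through by $p^{d+3}$ and applying the squeeze theorem then gives $\#\mathcal{S}_{g}^{\kappa}(\F_p)/p^{d+3}\to 1$, that is, $\#\mathcal{S}_{g}^{\kappa}(\F_p)\sim p^{d+3}$.

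I do not anticipate a serious obstacle; the corollary is essentially the limiting form of Lemma~\ref{lemma 3.2}. The one step deserving care is the boundedness observation of the second paragraph: it is exactly what guarantees that the ``bad'' case $\kappa_j\neq 0$ for some $j\geq p$ never arises in the large-$p$ limit, so that the clean asymptotic $\sim p^{d+3}$ holds uniformly for \emph{every} $\kappa\in\Theta(r+1,d+2)$.
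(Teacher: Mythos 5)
Your proposal is correct and follows exactly the paper's route: the paper's proof is the one-line observation that the hypothesis $\kappa_j=0$ for $j\geq p$ of Lemma~\ref{lemma 3.2} holds once $p$ is large (since the partition of the fixed integer $d+2$ has all parts bounded by $d+2$), after which the two bounds squeeze to $p^{d+3}$. You have simply written out the details the paper leaves implicit.
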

\begin{proof}
The condition requiring $\kappa_j=0$ for all $j\geq p$ is satisfied when $p$ is large, and the result thus follows from Lemma \ref{lemma 3.2}.
\end{proof}

In the result below, there is no constraint on $d$ when $p=2$. This is because $T(d+2)$ is always non-zero, even when $p=2$ and $d$ is odd. Recall that this was not the case for $M_p(d+2)$ in the statement of Theorem \ref{main}, which is why the case when $p=2$ and $d$ odd was excluded from the assertion.
\begin{Th}\label{last main theorem}
Fix a prime number $p$ and let $r,d\geq 0$ such that $r\leq \lfloor d/2\rfloor$. Recall that $\mathfrak{p}'(d,r)$ are $\mathfrak{p}(d,r)$ the limits defined by \eqref{def p prime} and \eqref{def p frak} respectively. Then, we have that
\[\mathfrak{p}(d, r)=\mathfrak{p}'(d, r)=\frac{T(r+1,d+2)}{T(d+2)}.\]
\end{Th}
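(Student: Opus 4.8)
The plan is to run the proof of Theorem \ref{main} verbatim, but to exploit the fact that in the limit $p\to\infty$ the two $p$-dependent conditions built into $\Omega_p(r+1,d+2)$ and into the definition of $M_p(r+1,d+2)$ — namely that each part be $\not\equiv 1\bmod p$ and that $\kappa_j=0$ for $j\geq p$ — both become vacuous, so that these partition sets collapse onto the single $p$-independent set $\Theta(r+1,d+2)$. Concretely, every part of a partition of $d+2$ into parts $\geq 2$ lies in $[2,d+2]$, so the index $j$ with part $j+1$ satisfies $j\leq d+1$. Hence for all $p>d+1$ no part can be $\equiv 1\bmod p$ (the only such value $\leq d+2$ is $1$ itself, which is excluded), giving $\Omega_p(r+1,d+2)=\Theta(r+1,d+2)$; and every such $\kappa$ automatically has $\kappa_j=0$ for $j\geq p$, giving $M_p(r+1,d+2)=T(r+1,d+2)$ and $M_p(d+2)=T(d+2)$. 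Since $p\to\infty$, these identifications hold eventually.

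Next I would decompose the numerator by multiplicity vector, exactly as in Theorem \ref{main}:
\[\#\mathcal{S}_g^\tau(\F_p)=\sum_{\kappa\in\Omega_p(r+1,d+2)}\#\mathcal{S}_g^\kappa(\F_p).\]
For $p>d+1$ the index set is $\Theta(r+1,d+2)$, a finite set of cardinality $T(r+1,d+2)$ that does not depend on $p$, and every $\kappa$ in it satisfies $\kappa_j=0$ for $j\geq p$. Corollary \ref{last boring corollary} then yields $\#\mathcal{S}_g^\kappa(\F_p)\sim p^{d+3}$ for each such $\kappa$, so that summing a fixed finite collection of these asymptotics gives $\#\mathcal{S}_g^\tau(\F_p)\sim T(r+1,d+2)\,p^{d+3}$. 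Summing over all admissible $r$ (equivalently, over all of $\Theta(d+2)$) produces $\#\mathcal{S}_g(\F_p)\sim T(d+2)\,p^{d+3}$. Taking the quotient and letting $p\to\infty$ gives $\mathfrak{p}'(d,r)=T(r+1,d+2)/T(d+2)$, which is well defined since $T(d+2)\neq 0$ always — this is precisely why, unlike in Theorem \ref{main}, no parity hypothesis on $d$ is required.

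Finally, for the passage from $\mathfrak{p}'$ to $\mathfrak{p}$ I would invoke Proposition \ref{boring prop}: two admissible functions define $\simeq_{\mathbf{P}^1}$-isomorphic covers precisely when they differ by a unit $u\in(\Z/p\Z)^\times$, and scaling a (nonzero, non-constant) admissible function by a unit preserves admissibility, so the surjection $\mathcal{S}_g(\F_p)\to\mathcal{AS}_g(\F_p)$ is exactly $(p-1)$-to-one, and likewise on each $\tau$-stratum. The factor $(p-1)$ cancels in the ratio, giving $\mathfrak{p}(d,r)=\mathfrak{p}'(d,r)$.

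The only point that warrants genuine care — and the place I would expect a referee to look — is the interchange of the limit with the sum over partitions. In the geometric setting of Theorem \ref{main} the partition set is fixed and only $q$ varies, whereas here both the summand count and the per-summand asymptotics are in principle $p$-dependent. The resolution is the stabilization observed in the first step: once $p>d+1$ the index set is literally $\Theta(r+1,d+2)$, of cardinality bounded independently of $p$, and each summand is controlled by the uniform two-sided bounds of Lemma \ref{lemma 3.2}, so the pooled error term is genuinely $o(p^{d+3})$ and no uniformity obstruction arises. Everything else is a direct transcription of the argument already carried out for the geometric problem.
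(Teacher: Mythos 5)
Your proposal is correct and follows essentially the same route as the paper: decompose $\#\mathcal{S}_{g,\tau}(\F_p)$ over multiplicity vectors, observe that for $p$ large the $p$-dependent constraints on the partitions become vacuous so the index set stabilizes to $\Theta(r+1,d+2)$, apply Corollary \ref{last boring corollary} to each summand, and cancel the exact $(p-1)$-to-one correspondence from Proposition \ref{boring prop}. Your explicit justification of the threshold $p>d+1$ and of the interchange of the $p\to\infty$ limit with the (stabilized, finite) sum is a welcome elaboration of what the paper leaves implicit, but it is not a different argument.
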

\begin{proof}
The proof is similar to that of Theorem \ref{main}. Assume that $p$ is large enough so that $\kappa_j=0$ for all $j\geq p$. Recall that by abuse of notation, $g:=g(d,p)$, $\tau=\tau(r,p)$. Since $d$ and $r$ are fixed, $g$ and $\tau$ increase linearly with $p$ (but this is suppressed in our notation). We have that 
\[\# \mathcal{S}_{g,\tau}(\F_p)=\sum_{\kappa\in \Theta(r+1,d+2)} \# \mathcal{S}_g^\kappa(\F_p).\] 
It follows from Lemma \ref{lemma 3.2} and Corollary \ref{last boring corollary} that as $p\rightarrow \infty$,
\[\# \mathcal{S}_{g, \tau}(\F_p)\sim T(r+1, d+2) p^{d+3}\text{ and } \# \mathcal{S}_{g, \tau}(\F_p)\sim T(d+2) p^{d+3}.\] Putting everything together, we obtain that
\[\mathfrak{p}'(d,r)=\frac{T(r+1,d+2)}{T(d+2)}.\]

That $\mathfrak{p}(d,r)=\mathfrak{p}'(d,r)$ follows from the equalities:
\[\# \mathcal{AS}_{g, \tau}(\F_p)\sim \frac{1}{(p-1)}\# \mathcal{S}_{g, \tau}(\F_p)\text{ and }\# \mathcal{AS}_{g, }(\F_p)\sim \frac{1}{(p-1)}\# \mathcal{S}_{g, }(\F_p),\] see the proof of Theorem \ref{main}.
\end{proof}

\section{Some examples}\label{s 4}

\par We illustrate results proved in the previous section through examples.

\subsection{The geometric case}
\par First, let us pick the prime $p=5$, and let $d=8$. The genus is given by $g=d\left(\frac{p-1}{2}\right)=8\times 2=16$. The $p$-rank $\tau=r(p-1)=4r$. The values of $r$ range from $r=0$ to $r=4$. Note that when $r=4$, $\tau=16=g$, and this is the case when the curve is ordinary. 

\par We compute the proportion of isomorphism classes of Artin--Schreier covers with genus $g=16$ and $5$-rank $\tau=4r$, in the large $q$-limit. There are a total of $42$ partitions of $10$, however, there are constraints on the partitions we consider. Recall that $M_p(r+1,d+2)=M(r+1, 10)$ is the number of partitions
\[10=\sum_j (j+1)\kappa_j\] for which $\kappa_j=0$ for $j$ outside the range $1\leq j<5$. Thus, we are to use only numbers $2,3,4,5$ in partitioning $10$ into $r+1$ numbers. Let's list the partitions for each choice of $r=0,\dots, 4$.

\begin{enumerate}
    \item \emph{r=0:} There is no partition, since $10=10$ is not allowed. Hence, $M(1, 10)=0$. Thus, the proportion of curves with $5$-rank $0$ is $0$ in the $q$-limit.
    \item \emph{r=1:} We write $10=a+b$, where $a,b\in \{2,\dots, 5\}$. The only partition is 
    \[10=5+5,\] hence, $M(2, 10)=1$.
    \item \emph{r=2:} We write $10=a+b+c$, where $a,b,c\in \{2,\dots, 5\}$. The partitions are 
    \[10=5+3+2=4+4+2=4+3+3,\] hence, $M(3, 10)=3$.
    \item \emph{r=3:}
    We write $10=a+b+c+d$ this time. Since they are all $\geq 2$, there are only two choices:
    \[10=2+2+2+4=2+2+3+3,\]
    hence, $M(4, 10)=2$.
    \item \emph{r=4:} We come to single ordinary case. The only partition is 
    \[10=2+2+2+2+2,\] and $M(5,10)=1$.
\end{enumerate}
Putting it all together, we find that \[M(10)=\sum_{r=1}^4 M(r+1, 10)=0+1+3+2+1=7.\] Therefore, according to Theorem \ref{main}, the proportion of isomorphism classes of covers of genus $g=16$ and $p$-rank $\tau=4r$ is given by the following proportions:
\[\begin{split}
    &\mathfrak{d}(5,16,0)=\frac{0}{7}, \\
    &\mathfrak{d}(5,16,4)=\frac{1}{7},\\ &\mathfrak{d}(5,16,8)=\frac{3}{7}, \\
    &\mathfrak{d}(5,16,12)=\frac{2}{7}, \\
    &\mathfrak{d}(5,16,16)=\frac{1}{7}. 
\end{split}\]

\subsection{The arithmetic case}
\par Let's forget the prime $p$, and study the arithmetic problem for $d=8$. The value of $r$ ranges from $0$ to $4$ and specifies a given speed at which $\tau$ increases with $p$. In this setting, we no longer have the constraint that the partitions should involve terms from $\{2,\dots, 5\}$, but only that the terms be $\geq 2$. One again, we enumerate partitions for the five values of $r$.

\begin{enumerate}
    \item \emph{r=0:} This time, $10=10$ \emph{is} allowed. Hence, $T(1, 10)=1$.
    \item \emph{r=1:} We write $10=a+b$, where $a,b\geq 2$. There are $4$ partitions
    \[10=2+8=3+7=4+6=5+5,\] hence, $T(2, 10)=4$.
    \item \emph{r=2:} We write $10=a+b+c$. The partitions are 
    \[10=6+2+2=5+3+2=4+4+2=4+3+3,\] hence, $T(3, 10)=4$.
    \item \emph{r=3:}
    We write $10=a+b+c+d$ this time. Since they are all $\geq 2$, there are only two choices:
    \[10=2+2+2+4=2+2+3+3,\]
    hence, $T(4, 10)=2$.
    \item \emph{r=4:} We come to single ordinary case. The only partition is 
    \[10=2+2+2+2+2,\] and $T(5,10)=1$.
\end{enumerate}
Hence, we have that $T(10)=12$. According to Theorem \ref{last main theorem} the proportions are as follows:
\[\begin{split}
  &  \mathfrak{p}(d,0)=\frac{1}{12},\\
   &  \mathfrak{p}(d,1)=\frac{4}{12},\\
    &  \mathfrak{p}(d,2)=\frac{4}{12},\\
     &  \mathfrak{p}(d,3)=\frac{2}{12},\\
      &  \mathfrak{p}(d,4)=\frac{1}{12}.\\
\end{split}\]
Thus, as $p\rightarrow \infty$, and $g$ increases with speed $\frac{d}{2}=4$, the $p$-rank grows with expected speed 
\[0\times \frac{1}{12}+1\times \frac{4}{12}+2\times \frac{4}{12} +3\times \frac{2}{12}+4\times \frac{1}{12}=\frac{19}{12}=1.5833\dots\]
\bibliographystyle{abbrv}
\bibliography{references}

\end{document}